\theoremstyle{plain}
\newtheorem{thm}{\protect\theoremname}
  \theoremstyle{definition}
  \newtheorem{defn}[thm]{\protect\definitionname}
  \theoremstyle{definition}
  \newtheorem{condition}[thm]{\protect\conditionname}
  \theoremstyle{plain}
  \newtheorem{lem}[thm]{\protect\lemmaname}
  \theoremstyle{plain}
  \newtheorem{prop}[thm]{\protect\propositionname}
  \theoremstyle{remark}
  \newtheorem{rem}[thm]{\protect\remarkname}
  \theoremstyle{plain}
  \newtheorem{cor}[thm]{\protect\corollaryname}
\DeclareMathOperator*{\argsup}{arg\,sup}
  \providecommand{\conditionname}{Condition}
  \providecommand{\corollaryname}{Corollary}
  \providecommand{\definitionname}{Definition}
  \providecommand{\lemmaname}{Lemma}
  \providecommand{\propositionname}{Proposition}
  \providecommand{\remarkname}{Remark}
\providecommand{\theoremname}{Theorem}
\begin{document}

\title{An inverse optimal stopping problem for diffusion processes\\
\bigskip{}
\today}

\author{Thomas Kruse and Philipp Strack}
\begin{abstract}
Let $X$ be a one-dimensional diffusion and let $g\colon[0,T]\times\mathbb{R}\to\mathbb{R}$
be a payoff function depending on time and the value of $X$. The
paper analyzes the inverse optimal stopping problem of finding a time-dependent
function $\pi:[0,T]\to\mathbb{R}$ such that a given stopping time
$\tau^{\star}$ is a solution of the stopping problem $\sup_{\tau}\mathbb{E}\left[g(\tau,X_{\tau})+\pi(\tau)\right]\,.$

Under regularity and monotonicity conditions, there exists a solution
$\pi$ if and only if $\tau^{\star}$ is the first time when $X$
exceeds a time-dependent barrier $b$, i.e. $\tau^{\star}=\inf\left\{ t\ge0\,|\,X_{t}\ge b(t)\right\} \,.$
We prove uniqueness of the solution $\pi$ and derive a closed form
representation. The representation is based on an auxiliary process
which is a version of the original diffusion $X$ reflected at $b$
towards the continuation region. The results lead to a new integral
equation characterizing the stopping boundary $b$ of the stopping
problem $\sup_{\tau}\mathbb{E}\left[g(\tau,X_{\tau})\right]$. \medskip{}

\textit{Keywords:} Optimal Stopping, Reflected Stochastic Processes,
Dynamic Mechanism Design, Dynamic Implementability, \medskip{}

\textit{MSC 2010: 91B02, 91A60, 60G40, 62L15}
\end{abstract}

\thanks{{\footnotesize{}\qquad{}Thomas Kruse, University of Duisburg-Essen,
Thea-Leymann-Str. 9, 45127 Essen, Germany. Email: thomas.kruse@uni-due.de;}}

\thanks{{\footnotesize{}\qquad{}Philipp Strack, University of Califonia,
Berkeley, 513 Evans Hall, Berkeley, 94720 California, USA. Web: \href{http://www.philippstrack.com}{http://www.philippstrack.com},
Email: philipp.strack@gmail.com. }}

\thanks{{\footnotesize{}\qquad{}We would like to thank Stefan Ankirchner,
Dirk Bergemann, Paul Heidhues, David Hobson, Monique Jeanblanc, Daniel
Krähmer, Benny Moldovanu, Jenö Pál, Goran Peskir, Sven Rady, Albert
Shiryaev, Juuso Toikka and seminar audiences in Austin, Bonn, HU Berlin,
Berkeley, Bielefeld, Boston, Caltech, Columbia, Duke, Harvard, Maastricht,
Microsoft Research NE, MIT theory camp, Northwestern, NYU, Paris Evry,
Toronto, University College London, UPenn, Warwick, Yale for many
helpful comments and suggestions. Any remaining errors are our own.
Financial support by the Bonn Graduate School of Economics, the Bonn
International Graduate School of Mathematics, the German Research
Foundation (DFG), the Hausdorff Center for Mathematics, the SFB TR
15 and the French Banking Federation through the Chaire ``Markets
in Transition'' is gratefully acknowledged.}}
\maketitle

\section*{Introduction}

Optimal stopping is omnipresent in applications of dynamic optimization
in economics, statistics, and finance. Examples are the optimal exercise
timing of options, when to stop searching, and the quickest detection
problem. One method to solve optimal stopping problems in a Markovian
framework is to identify the stopping region. Optimal stopping times
are then given as first hitting times of the stopping region.

Many applications of optimal stopping naturally lead to the question
of how to change a payoff such that a given stopping rule becomes
optimal. Mathematically, this \textit{inverse} optimal stopping problem
consists of modifying the payoff of a stopping problem in such a way
that it is optimal to stop at the first time when a \textit{given}
set is hit. In many economic applications informational constraints
furthermore restrict the set of admissible modifications to the addition
of a time-dependent function to the original payoff, i.e. transfers
which are independent of the realization of the process.

To fix ideas, consider the continuous-time, finite horizon optimal
stopping problem 
\[
\sup_{\tau\in\mathcal{T}}\mathbb{E}\left[g(\tau,X_{\tau})\right]\,,
\]
where $\mathcal{T}$ is the set of stopping times with values in $[0,T]$,
$X$ is a one-dimensional diffusion and $g$ is a smooth payoff function.
A deterministic function $\pi:[0,T]\to\mathbb{R}$ is called a transfer.
We say that a set $A\subset[0,T]\times\mathbb{R}$ is implemented
by a transfer $\pi$ if the first time $\tau_{A}$ when $X$ hits
$A$ is optimal in the stopping problem with payoff $g+\pi,$ i.e.
if
\begin{equation}
\tau_{A}\in\argsup_{\tau\in\mathcal{T}}\mathbb{E}\left[g(\tau,X_{\tau})+\pi(\tau)\right].\label{eq:impl_def_intro}
\end{equation}

Inverse optimal stopping problems play an important role in different
economic situations. One example are dynamic principal-agent models:
There is an agent who \textit{privately} observes the stochastic process
$X$ and aims at maximizing her expected payoff $\sup_{\tau\in\mathcal{T}}\mathbb{E}\left[g(\tau,X_{\tau})\right]$
from stopping the process. The principal observes the stopping decision
of the agent, but not the realization of the process. She aims at
inducing the agent to take a particular stopping decision given by
the hitting time $\tau_{A}$. In order to influence the agent's stopping
decision the principal commits to a transfer $\pi$ \textendash{}
a payment which is due at the moment when the agent stops. The principal
needs to construct the transfer $\pi$ in such a way that $\tau_{A}$
becomes optimal in the modified stopping problem $\sup_{\tau\in\mathcal{T}}\mathbb{E}\left[g(\tau,X_{\tau})+\pi(\tau)\right]$.
For example, the agent could be a firm that has developed a new technology
and now has to decide when to introduce it to the market place. The
firm observes private signals regarding the demand, and this knowledge
changes over time. The principal is a social planner who also takes
the consumer surplus of the new technology into account and hence
prefers a different stopping decision than the firm. The inverse optimal
stopping problem analyzes the question how the planner can align the
preferences of the firm by subsidizing the market entry through a
transfer (see Subsection \ref{subsec:Providing-Incentives} for a
specific example).

Other economic examples of inverse optimal stopping problems are the
design of unemployment benefits \citet{mccall1970economics,hopenhayn1997optimal},
the structuring of management compensation, the sale of irreversible
investment options \citet{board2007selling}, as well as well as the
inference of deliberation costs in search theory \citet{drugowitsch2012cost,fudenberg2015stochastic}.
Section \ref{sec:Two-Motivating-Examples} presents two more specific
examples. For further economic examples and applications to revenue
management we refer to \citet{krusestrack2013optimal}, where inverse
optimal stopping problems have been introduced in a discrete-time
framework. 

The main result (Theorem \ref{thm:Cut-offs_implementable}) states
that all cut-off regions $A=\left\{ (t,x)\,|\,x\ge b(t)\right\} $
are implementable provided that the boundary $b$ is càdlàg and has
summable downwards jumps. Moreover, we suppose that a so-called single
crossing condition is satisfied. It requires that the expected gain
of waiting an infinitesimal amount of time is non-increasing in the
value of the process $X.$ Formally, we suppose that the function
\begin{equation}
x\mapsto\lim_{h\searrow0}\frac{1}{h}\mathbb{E}\left[g(t+h,X_{t+h}^{t,x})-g(t,x)\right]=(\partial_{t}+\mathcal{L})g(t,x)\label{eq:single_crossing_intro}
\end{equation}
is non-increasing, where $\mathcal{L}$ denotes the generator of the
diffusion $X$. Furthermore, we show that the solution $\pi$ implementing
the cut-off region $A=\left\{ (t,x)|x\ge b(t)\right\} $ admits the
following closed form representation 

\begin{equation}
\pi(t)=\mathbb{E}\left[\int_{t}^{T}(\partial_{t}+\mathcal{L})g(s,\tilde{X}_{s}^{t,b(t)})\mathrm{d}s\right]\,.\label{eq:transfer_intro}
\end{equation}

Here $(\tilde{X}_{s}^{t,b(t)})_{s\geq t}$ denotes the unique process
starting on the barrier $b(t)$ at time $t$ which results from reflecting
the original process $X$ at the barrier $(b(s))_{s\in[t,T]}$ away
from $A$. 

As shown in \citet{kotlow1973}, \citet{jacka1992finite} and \citet{villeneuve2007threshold}
the single crossing condition (or a weaker version of it) ensures
that the stopping region in stopping problems of the form $v(t,x)=\sup_{\tau\in\mathcal{T}_{t,T}}\mathbb{E}\left[g(\tau,X_{\tau}^{t,x})\right]$
is of cut-off type, i.e. there exists a barrier $b:[0,T]\to\mathbb{R}$
such that $x\ge b(t)$ if and only if $v(t,x)=g(t,x)$. In Proposition
\ref{prop:only_cut-offs_implementable} we show that this result translates
to implementable regions. We introduce the notion of \textit{strict}
implementability for sets $A\subset[0,T]\times\mathbb{R}$, where
we additionally demand that $A$ coincides with the stopping region
of the problem (\ref{eq:impl_def_intro}). Proposition \ref{prop:only_cut-offs_implementable}
states that under the single crossing condition only cut-off regions
are strictly implementable. Furthermore, we show that if the monotonicity
in Equation (\ref{eq:single_crossing_intro}) is strict, then cut-off
regions with a càdlàg barrier with summable downward jumps are strictly
implementable (Corollary \ref{thm:strict_single_crossing_impl_strong_impl}).
In this way the following characterization of strictly implementable
regions holds up the assumption of right continuity and summable downward
jumps: A region is strictly implementable if and only if it is of
cut-off type.

Furthermore, the transfer implementing a cut-off region is unique
up to an additive constant (Theorem \ref{thm:Uniqueness}). This result
leads to a new characterization of optimal stopping boundaries (Corollary
\ref{cor:appl_opt_stopping}). If the first hitting time $\tau_{A}$
of a set $A$ is optimal in the stopping problem $\sup_{\tau\in\mathcal{T}}\mathbb{E}\left[g(\tau,X_{\tau})\right]$
then $A$ is implemented by the zero transfer. Uniqueness of the transfer
implies that 
\begin{equation}
\mathbb{E}\left[\int_{t}^{T}(\partial_{t}+\mathcal{L})g(s,\tilde{X}_{s}^{t,b(t)})\mathrm{d}s\right]=0\label{eq:integral_equation_intro}
\end{equation}
for all $t\in[0,T].$ Remarkably, the nonlinear integral Equation
(\ref{eq:integral_equation_intro}) is not only necessary but also
sufficient for optimality.   In Section \ref{sec:Application-To-Optimal-Stopping}
we discuss the relation to the integral equation derived in \citet{kim1990analytic},
\citet{jacka1991optimal} and \citet{carr2006alternative} (see also
\citet{pevskir2006optimal}). 

The paper is organized as follows. Section \ref{sec:Two-Motivating-Examples}
presents specific examples of inverse optimal stopping problems. In
Section \ref{sec:Problem-Formulation} we set up the model and introduce
the notion of implementability. In Section \ref{sec:Implemtable_implies_cut-off}
we show that only cut-off regions are strictly implementable. Section
\ref{sec:Implementability-of-Cut-Off} is devoted to the converse
implication. First we introduce reflected processes and formally derive
the representation (\ref{eq:transfer_intro}) of the transfer (Subsection
\ref{subsec:Constrained-Processes}). Subsection \ref{subsec:Cut-offs_implementable}
contains the main results about implementability of cut-off regions.
In Subsection \ref{subsec:Properties-of-the-transfer} we present
the main properties of the transfer and in Subsection \ref{subsec:Uniqueness-of-The-transfer}
we provide the uniqueness result. In Section \ref{sec:Application-To-Optimal-Stopping}
we derive and discuss the integral equation (\ref{eq:integral_equation_intro}).

\section{Motivating Examples\label{sec:Two-Motivating-Examples}}

\subsection{\label{subsec:Providing-Incentives}Providing Incentives for Investment
in a Project of Unknown Profitability}

A single agent (or firm) can invest into a project of unknown value
$\theta\in\mathbb{R}$. The value (or discounted expected future
return) of the project $\theta\in\mathbb{R}$ is normally distributed
with mean $X_{0}$ and variance $\sigma_{0}^{2}$. The agent learns
about the project's value over time by observing a signal (or payoff)
$(Z_{t})$ which is a Brownian motion $(W_{t})$ (independent of $\theta$)
plus drift equal to the true return of the project
\[
\mathrm{d}Z_{t}=\theta\,\mathrm{d}t+\mathrm{d}W_{t}\,.
\]
When the agent invests into the project at time $\tau$ he receives
its value discounted by the time at which he invested $e^{-r\,\tau}\theta\,.$
The agent's problem is to find a stopping time adapted to $\mathbb{F}=\left(\mathcal{F}_{t}\right)_{t\ge0}$
(the natural filtration of $Z$) which solves $\sup_{\tau}\mathbb{E}\left[e^{-r\,\tau}\theta\right]\,.$
If we denote by $X_{t}=\mathbb{E}\left[\theta\mid\mathcal{F}_{t}\right]$
the posterior expected value that the agent assigns to the project,
the law of iterated expectations implies that this problem is equivalent
to
\[
\sup_{\tau}\mathbb{E}\left[e^{-r\,\tau}X_{\tau}\right]\,.
\]
It is well known (cf. Theorem 10.1 in \citet{liptser2013statistics})
that after seeing the signal $(Z_{s})_{s\leq t}$ the agents posterior
belief about the value of the project is normally distributed with
variance $\sigma_{t}^{2}=\frac{1}{\sigma_{0}^{-2}+t}$ and mean $X_{t}=\sigma_{t}^{2}(X_{0}\sigma_{0}^{-2}+Z_{t})\,,$
and furthermore that there exists a Brownian motion $B_{t}$ (in the
filtration $\mathbb{F}$) such that
\[
\mathrm{d}X_{t}=\sigma_{t}^{2}\,\mathrm{d}B_{t}\,.
\]
Hence, the agent's learning and investment problem is equivalent to
the problem of stopping the diffusion $X$. As the problem is Markovian
in $(t,X)$ and the returns from waiting to invest are decreasing
in $X_{t}$ it follows that the optimal solution is to invest once
the expected value of the project $X_{t}$ exceeds a time-dependent
threshold $b^{0}$\footnote{see also Proposition \ref{prop:only_cut-offs_implementable} below.}
\[
\tau=\inf\{t\colon X_{t}\geq b^{0}(t)\}\,.
\]
The decision to invest into a project might for example correspond
to bringing a new product to the market. In many such investment situations
the incentive of the firm to invest is not aligned with the incentives
of society. For example, a pharmaceutical firm which takes an investment
decision based upon the profitability of a treatment, ignoring consumer
surplus generated from the availability of medicine, will invest too
late and in too few treatments. To mitigate this inefficiency the
government could use subsidies (and taxes) on new projects which depend
on the time the firm invests and brings the project to the market.
For example, in Figure\textbf{ }\ref{fig:example} the dashed line
shows the investment threshold $b^{0}\colon[0,T]\to\mathbb{R}$ at
which the firm invests without a transfer. Suppose that the government
wants the firm to invest earlier, for example at the first time when
the firm's belief about the investment value $X$ exceeds the barrier
$b^{\pi}\colon[0,T]\to\mathbb{R},b^{\pi}(t)=0.5\sqrt{T-t}$. In Section
\ref{sec:Implementability-of-Cut-Off} below we show that this is
possible for the government by using a transfer $\pi\colon[0,T]\to\mathbb{R}$.
The solid line in the left-hand side of Figure \ref{fig:example}
depicts such a transfer.

\subsection{Quickest change point detection in a principal-agent framework\label{subsec:quickest_detect}}

Quickest detection problems play a prominent role in mathematical
statistics and are a key ingredient in a number of models in the applied
sciences such as quality control, epidemiology and geology (see, e.g.,
\citet[Chapter IV][]{shiryaev2007optimal}, \citet[Chapter IV, Section 22][]{peskir2006optimal}
and \citet{muller1994change} for historical accounts on the problem
formulations and specific applications). As an economic motivation
consider a venture capitalist and an entrepreneur. The entrepreneur
observes an informative signal about whether it is still profitable
to run the firm or not. This information is often unobservable to
the venture capitalist as he possesses no knowledge of the specific
market. The venture capitalist who finances the firm wants to stop
operations once he is $60\%$ sure that the firm became unprofitable.
The entrepreneur might prefer running the firm much longer as doing
so yields private benefits to him. An important question in the venture
capital industry is how to design compensation schemes which align
the interests of the entrepreneur with those of the venture capitalist. 

We consider here a variant of the quickest detection problem of a
Wiener process from a principal-agent perspective. For the formulation
of the single-agent quickest detection problem we follow closely \citet{gapeev2006wiener}.
There is an agent observing on the finite time interval $[0,T]$ the
path of a one-dimensional Brownian motion $X$ which changes its drift
from $0$ to $\mu\ne0$ at some random time $\theta$. The random
time $\theta$ is independent of $X$ and is exponentially distributed
with parameter $\lambda\in(0,\infty)$. The agent does not observe
$\theta$, but has to infer information about $\theta$ from the continuous
observation of $X$. The agent's goal is to find a stopping time of
$X$ that is as close to $\theta$ as possible. More formally, for
$c\in(0,\infty)$ the agent aims at finding a $[0,T]$-valued stopping
time $\tau$ with respect to the filtration $\mathcal{F}^{X}$ generated
by $X$ that attains the minimum in 
\[
\inf_{0\le\tau\le T}\left(\mathbb{P}\left[\tau\le\theta\right]+c\,\mathbb{E}\left[(\tau-\theta)^{+}\right]\right).
\]
As shown in \citet{gapeev2006wiener} this is equivalent to solving
the stopping problem
\begin{equation}
\inf_{0\le\tau\le T}\mathbb{E}\left[1-p_{\tau}+c\int_{0}^{\tau}p_{t}dt\right]=1-\sup_{0\le\tau\le T}\mathbb{E}\left[\int_{0}^{\tau}\lambda-(c+\lambda)p_{t}dt\right],\label{eq:quick_detect_stopping_prob}
\end{equation}
where the process $p$ satisfies for all $t\in[0,T]$ that
\[
dp_{t}=\lambda(1-p_{t})dt+\mu p_{t}(1-p_{t})dW_{t},\quad p_{0}=0
\]
and where $W$ is a standard Brownian motion. The process $p$ satisfies
for all $t\in[0,T]$ that $p_{t}=\mathbb{P}\left[\theta\le t\:|\:\mathcal{F}_{t}^{X}\right]$
and thus describes at each time $t\in[0,T]$ the posterior belief
whether $\theta$ already occurred. Now suppose that there is a principal
who does neither observe $X$ nor $\theta$, but is notified at the
moment when the agent stops. The principal's goal is to construct
a transfer $\pi\colon[0,T]\to\mathbb{R}$ to the agent such that the
principal is notified at the first time before $T$ when the posterior
belief $p$ exceeds a threshold level of, say, 60\%. It follows from
the results in Section \ref{sec:Implementability-of-Cut-Off} below
that this is possible (note in particular that the flow payoff $p\mapsto\lambda-(c+\lambda)p$
in (\ref{eq:quick_detect_stopping_prob}) is a decreasing function,
cf. Condition \ref{def:SingleCrossing}). The right-hand side of Figure
\ref{fig:example} shows such a transfer $\pi$.

\begin{figure}
\includegraphics[width=0.5\textwidth]{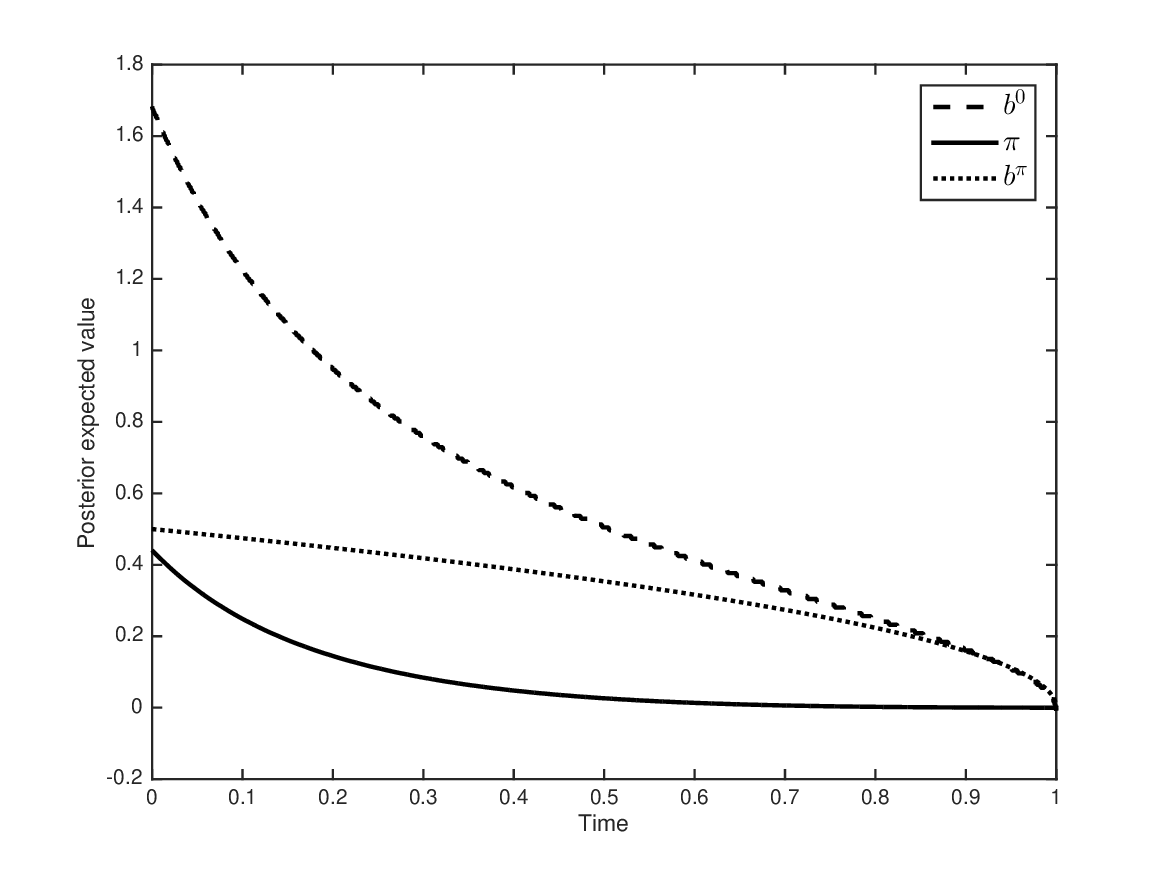}\includegraphics[width=0.5\textwidth]{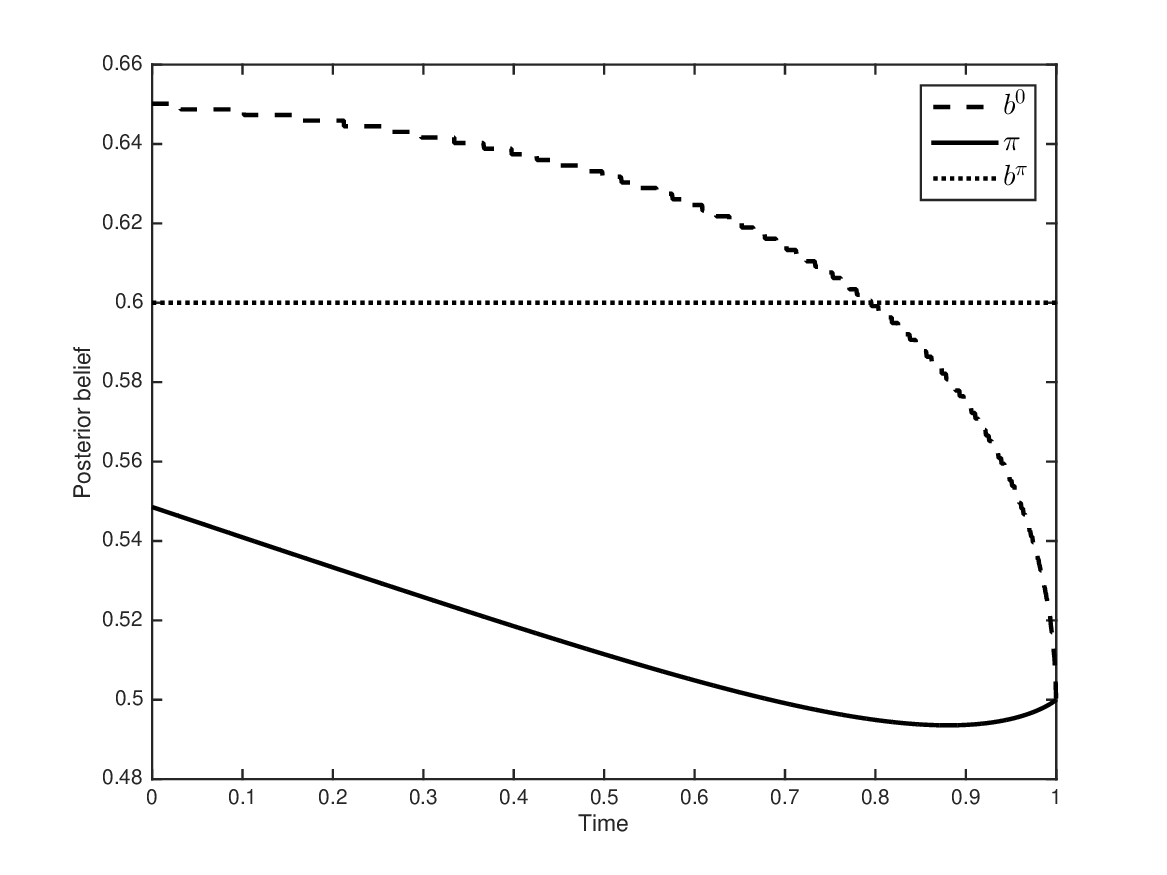}\caption{\label{fig:example}{\scriptsize{}Left: The dashed line depicts the
optimal investment threshold (stopping barrier) $b^{0}$ in the setting
of Example \ref{subsec:Providing-Incentives} when there is no interference
by the government. To incentivize the firm to invest at the first
time when the firm's belief about the investment value $X$ exceeds
the barrier $b^{\pi}\colon[0,T]\to\mathbb{R},b^{\pi}(t)=0.5\sqrt{T-t}$,
the government can use the transfer $\pi\colon[0,T]\to\mathbb{R}$
given by the solid line. The figure is generated using the parameters
$r=1$, $\sigma_{0}^{2}=4$ and $T=1$. Right: The dashed line shows
the optimal stopping barrier $b^{0}$ of the stopping problem (\ref{eq:quick_detect_stopping_prob})
of Example \ref{subsec:quickest_detect}. The solid line depicts a
transfer $\pi\colon[0,T]\to\mathbb{R}$ that induces the agent to
stop at the first time when the posterior belief $p$ exceeds the
level $60\%$. For the figure the parameters $c=\mu=1$ and $T=0.5$
are used. }}
\end{figure}

\subsection{Designing American Options}

Our third examples considers the design of American options. An American
option gives the holder the right, but not the obligation to receive
stocks of a company at a prespecified price over a given time horizon.
The optimal time at which to exercise such an option depends crucially
on the expectations of future stock prices. American options are often
used as managerial compensation. The timing at which a manager exercises
the options given to him, provides information about the managers
expectation of future profitability of the firm. Because of this informational
value, shareholders might want to design American options in such
a way that they provide the manager with incentives to exercise the
option at a given time. Our results imply which exercise timings of
the manager can be incentivized through stock options where the exercise
price is time dependent. 

\section{Problem Formulation\label{sec:Problem-Formulation}}

\subsection{Dynamics}

In this paper we consider optimal stopping problems with finite time
horizon $T<\infty$. The underlying probability space $(\Omega,\mathcal{F},\mathbb{P})$
supports a one-dimensional Brownian motion $W$. Let $\mathbb{F}=\left(\mathcal{F}_{t}\right)_{t\in[0,T]}$
be the filtration generated by $W$ satisfying the usual assumptions.
We denote the set of $\mathbb{F}$-stopping times with values in $[0,T]$
by $\mathcal{T}$. For $t<T$ we refer to $\mathcal{T}_{t,T}$ as
the subset of stopping times which take values in $[t,T].$ The process
$X$ follows the time-inhomogeneous diffusion dynamics 
\begin{equation}
\mathrm{d}X_{t}=\mu(t,X_{t})\mathrm{d}t+\sigma(t,X_{t})\mathrm{d}W_{t}.\label{eq:sde}
\end{equation}
We denote by $\mathcal{L}=\mu\partial_{x}+\frac{1}{2}\sigma^{2}\partial_{xx}$
the infinitesimal generator of $X$. The coefficients $\mu,\sigma:[0,T]\times\mathbb{R}\to\mathbb{R}$
are Borel measurable functions satisfying the following global Lipschitz
and linear growth assumptions: There exists a positive constant $L$
such that
\begin{eqnarray*}
|\mu(t,x)-\mu(t,y)|+|\sigma(t,x)-\sigma(t,y)| & \le & L|x-y|\\
\mu(t,x)^{2}+\sigma(t,x)^{2} & \le & L^{2}(1+x^{2})
\end{eqnarray*}
for all $t\in[0,T]$ and $x,y\in\mathbb{R}$. Under this assumption
there exists a unique strong solution $(X_{s}^{t,x})_{s\ge t}$ to
(\ref{eq:sde}) for every initial condition $X_{t}^{t,x}=x$ (see
e.g. \citet[Theorems 2.5 and 2.9]{karatzas1991brownian}) . Moreover,
it follows that the comparison principle holds true (see e.g. \citet[Proposition 2.18]{karatzas1991brownian}):
The path of the process starting at a lower level $x\le x'$ at time
$t$ is smaller than the path of the process starting in $x'$ at
all later times $s>t$
\begin{equation}
X_{s}^{t,x}\le X_{s}^{t,x'}\qquad\mathbb{P}-a.s.\label{eq:ComparisonPrincipleOriginal}
\end{equation}

\subsection{Payoffs and Transfers}

As long as the process $X$ is not stopped there is a flow payoff
$f$ and at the time of stopping there is a terminal payoff $g$.
The payoffs $f,g:[0,T]\times\mathbb{R}\to\mathbb{R}$ depend on time
and the value of the signal. Formally, the expected payoff for using
a stopping time $\tau\in\mathcal{T}_{t,T}$ equals
\[
W(t,x,\tau)=\mathbb{E}\left[\int_{t}^{\tau}f(s,X_{s}^{t,x})\mathrm{d}s+g(\tau,X_{\tau}^{t,x})\right],
\]
given that $X$ starts in $x\in\mathbb{R}$ at time $t\in[0,T]$.
We assume that the payoff function $f$ is continuous and Lipschitz
continuous in the $x$ variable uniformly in $t$. Moreover, we suppose
that $g\in C^{1,2}([0,T]\times\mathbb{R})$ and that the functions
$g$ and $(\partial_{t}+\mathcal{L})g$ are Lipschitz continuous in
the $x$ variable uniformly in $t$.

We will analyze how preferences over stopping times change if there
is an additional payoff which only depends on time. 
\begin{defn}
A measurable, bounded function $\pi:[0,T]\to\mathbb{R}$ is called
a transfer. 
\end{defn}
We define the value function $v^{\pi}\colon[0,T]\times\mathbb{R}\to\mathbb{R}$
of the stopping problem with payoffs $f$ and $g$ and an additional
transfer $\pi$ by 
\begin{equation}
v^{\pi}(t,x)=\sup_{\tau\in\mathcal{T}_{t,T}}\left(W(t,x,\tau)+\mathbb{E}\left[\pi(\tau)\right]\right).\label{eq:stopping_problem_with_transfer}
\end{equation}
Moreover we introduce for every $t\in[0,T]$ the stopping region 
\[
D_{t}^{\pi}=\left\{ x\in\mathbb{R}\,|\,v^{\pi}(t,x)=g(t,x)+\pi(t)\right\} .
\]

\subsection{Implementability}

A measurable set $A\subset[0,T]\times\mathbb{R}$ is called \textit{time-closed}
if for each time $t\in[0,T]$ the slice $A_{t}=\{x\in\mathbb{R}\,|\,(t,x)\in A\}$
is a closed subset of $\mathbb{R}$ . Let $X$ start in $x\in\mathbb{R}$
at time $t\in[0,T].$ For a time-closed set $A$ we introduce the
first time when $X$ hits $A$ by
\[
\tau_{A}^{t,x}=\inf\left\{ s\ge t\,|\,X_{s}^{t,x}\in A_{s}\right\} \wedge T.
\]
We now come to the definition of implementability.
\begin{defn}[Implementability]
A time-closed set $A$ is implemented by a transfer $\pi$ if the
stopping time $\tau_{A}^{t,x}$ is optimal in (\ref{eq:stopping_problem_with_transfer}),
i.e. for every $t\in[0,T]$ and $x\in\mathbb{R}$ 
\[
v^{\pi}(t,x)=W(t,x,\tau_{A}^{t,x})+\mathbb{E}\left[\pi(\tau_{A}^{t,x})\right]\,.
\]
\end{defn}
For a time-closed set $A$ a necessary condition for implementability
is that each slice $A_{t}$ is included in the stopping region $D_{t}^{\pi}$.
Indeed, let $A$ be implemented by $\pi$ and let $t\in[0,T]$ and
$x\in A_{t}$. Then we have $\tau_{A}^{t,x}=t$. Since $\tau_{A}^{t,x}$
is optimal, this implies $v^{\pi}(t,x)=g(t,x)+\pi(t)$ and hence $x\in D_{t}^{\pi}$.
Consequently, we have $A_{t}\subseteq D_{t}^{\pi}$. 

Observe that the converse inclusion $D_{t}^{\pi}\subseteq A_{t}$
does not necessarily hold true, since optimal stopping times are in
general not unique. At some point $(t,x)\in[0,T]\times\mathbb{R}$
it might be optimal to stop immediately ($x\in D_{t}^{\pi})$ as well
as to wait a positive amount of time until $X$ hits $A$ ($x\notin A_{t}$).
A particularly simple example is the case where $X$ is a martingale
and $f(t,x)=0$ and $g(t,x)=x$. The optional stopping theorem implies
that all stopping times $\tau\in\mathcal{T}_{t,T}$ generate the same
expected payoff $W(t,x,\tau)=x$. Therefore, every set $A$ is implemented
by the zero transfer. The stopping region consists of the whole state
space $D_{t}^{0}=\mathbb{R}$. 

We introduce the notion of strict implementability, where ambiguity
in optimal strategies is ruled out: whenever it is optimal to continue
a positive amount of time it is not optimal to stop.
\begin{defn}[Strict Implementability]
A time-closed set $A$ is strictly implemented by a transfer $\pi$
if $A$ is implemented by $\pi$ and $v^{\pi}(t,x)>g(t,x)+\pi(t)$
for all $x\notin A_{t}$ and $t\in[0,T]$.
\end{defn}
In particular, every strictly implementable set $A$ satisfies $A_{t}=D_{t}^{\pi}$
for the transfer $\pi$. Since the stopping regions $D_{t}^{\pi}$
are closed (see Lemma \ref{lem:continuity} below) the restriction
to time-closed sets is no loss of generality. Any set which is not
time-closed can not be strictly implemented.

Note that the notion of implementability generalizes the notion of
optimal stopping times. If $\tau_{A}^{t,x}$ is an optimal stopping
time in a stopping problem of the form 
\[
\sup_{\tau\in\mathcal{T}_{t,T}}\mathbb{E}\left[\int_{t}^{\tau}f(s,X_{s}^{t,x})\mathrm{d}s+g(\tau,X_{\tau}^{t,x})\right]
\]
 for all $(t,x)\in[0,T]\times\mathbb{R}$, then it is implemented
by the zero transfer. 

\subsection{Single Crossing And Cut-Off Regions}

Next we introduce the main structural condition on the payoff functions.
\begin{condition}[Single-Crossing]
\label{def:SingleCrossing}We say that the single crossing condition
is satisfied if for all $t\in[0,T]$ the mapping $x\mapsto f(t,x)+(\partial_{t}+\mathcal{L})g(t,x)$
is non-increasing. If this monotonicity is strict, then we say that
the strict single crossing condition holds.

Note that the (strict) single crossing condition is satisfied in a
number of examples. For instance it is satisfied in the examples of
Subsections \ref{subsec:Providing-Incentives} and \ref{subsec:quickest_detect}.
\end{condition}
Moreover, we define a special subclass of time-closed sets.\footnote{All our results hold analogously for a lower stopping boundary $A_{t}=(-\infty,b(t)]$
if we impose instead of our single crossing condition that $x\mapsto f(t,x)+(\partial_{t}+\mathcal{L})g(t,x)$
is non-decreasing.}
\begin{defn}
\label{def-cut-off}A time-closed set $A$ is called a cut-off region
if there exists a function $b:[0,T]\to\overline{\mathbb{R}}$ such
that $A_{t}=[b(t),\infty)$. In this case we call $b$ the associated
cut-off and we write 
\[
\tau_{A}^{t,x}=\tau_{b}^{t,x}=\inf\{s\ge t\,|\,X_{s}^{t,x}\ge b(s)\}\wedge T
\]
for $(t,x)\in[0,T]\times\mathbb{R}$. We call $\tau_{b}$ a cut-off
rule. We say that a cut-off region $A$ is regular, if the associated
cut-off $b:[0,T]\to\mathbb{R}$ is càdlàg (i.e. is right continuous
and has left limits in $\mathbb{R}$) and has summable downward jumps,
i.e.
\[
\sum_{0\le s\le t}(\Delta b_{s})^{-}<\infty.
\]
\end{defn}

\section{strictly Implementable Regions are Cut-Off Regions\label{sec:Implemtable_implies_cut-off}}

For optimal stopping problems it is well-known that under the single
crossing condition (or a weaker version of it) there exists a cut-off
rule that is optimal (see e.g. \citet{kotlow1973}, \citet{jacka1992finite}
or \citet{villeneuve2007threshold}). In this section we show that
the opposite direction holds more generally for strict implementability:
Only cut-off regions can be strictly implemented. 

We first state the following regularity result about $v^{\pi}$.
\begin{lem}
\label{lem:continuity}For every transfer $\pi$ and every $t\in[0,T]$
the mapping $x\mapsto v^{\pi}(t,x)$ is Lipschitz continuous. In particular,
the stopping region $D_{t}^{\pi}$ is closed. 
\end{lem}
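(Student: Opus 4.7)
The plan is to reduce the Lipschitz estimate for $v^\pi(t,\cdot)$ to an a-priori estimate for $W(t,\cdot,\tau)$ that is uniform over stopping times $\tau\in\mathcal{T}_{t,T}$. The crucial observation is that $\pi$ depends only on time, so for any fixed $\tau\in\mathcal{T}_{t,T}$ and any initial values $x,x'\in\mathbb{R}$, the term $\mathbb{E}[\pi(\tau)]$ is the same regardless of the initial value of the diffusion. Applying the elementary inequality $|\sup A - \sup B|\le \sup |A-B|$ to the definition of $v^\pi$ therefore yields
\[
\bigl|v^\pi(t,x)-v^\pi(t,x')\bigr|\le \sup_{\tau\in\mathcal{T}_{t,T}}\bigl|W(t,x,\tau)-W(t,x',\tau)\bigr|,
\]
and the transfer $\pi$ has been completely eliminated from the right-hand side. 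This is what makes the argument go through despite the fact that $\pi$ is assumed merely measurable and bounded.

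Next I would bound $|W(t,x,\tau)-W(t,x',\tau)|$ by the Lipschitz continuity of $f$ in $x$ (uniformly in $t$) and of $g$ in $x$ (a consequence of the assumed bounded derivatives of $g$), obtaining
\[
\bigl|W(t,x,\tau)-W(t,x',\tau)\bigr|\le L_f\,\mathbb{E}\!\left[\int_t^T\bigl|X_s^{t,x}-X_s^{t,x'}\bigr|\mathrm{d}s\right]+L_g\,\mathbb{E}\!\left[\sup_{s\in[t,T]}\bigl|X_s^{t,x}-X_s^{t,x'}\bigr|\right].
\]
Then I invoke the standard a-priori SDE estimate, which follows from It\^o's isometry, the Burkholder--Davis--Gundy inequality, and Gronwall's lemma applied to (\ref{eq:sde}) using the uniform Lipschitz continuity of $\mu$ and $\sigma$:
\[
\mathbb{E}\!\left[\sup_{s\in[t,T]}\bigl|X_s^{t,x}-X_s^{t,x'}\bigr|^2\right]\le C_T\,|x-x'|^2,
\]
with $C_T$ depending only on $T$ and the Lipschitz constant $L$. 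Combining these estimates yields $|v^\pi(t,x)-v^\pi(t,x')|\le K|x-x'|$ for a constant $K$ independent of $\pi$, which is the desired Lipschitz bound.

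For the second assertion, note that $D_t^\pi$ can be written as the level set
\[
D_t^\pi=\bigl\{x\in\mathbb{R}\,\bigl|\,v^\pi(t,x)-g(t,x)=\pi(t)\bigr\}.
\]
Since $v^\pi(t,\cdot)$ is continuous by the first part and $g(t,\cdot)$ is continuous by assumption, the map $x\mapsto v^\pi(t,x)-g(t,x)$ is continuous, and $D_t^\pi$ is the preimage of the closed singleton $\{\pi(t)\}$ under a continuous function; hence it is closed.

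Because all Lipschitz and moment constants in the estimates above are classical and independent of the transfer $\pi$, there is no genuine obstacle. The only subtle point worth flagging is that one should not attempt joint continuity in $(t,x)$: a merely measurable $\pi$ can induce discontinuities of $v^\pi$ in the time variable, so the argument must be organized to work slicewise in $x$, as above.
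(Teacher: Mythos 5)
Your argument is essentially the paper's: cancel $\pi$ inside the supremum over stopping times, bound the difference of $W$'s by the Lipschitz constants of $f$ and $g$, and invoke the standard moment estimate $\mathbb{E}[\sup_s|X_s^{t,x}-X_s^{t,x'}|]\le C|x-x'|$; the closedness then follows from continuity of $x\mapsto v^\pi(t,x)-g(t,x)$. The paper cites a moment estimate from \citet{kunita2004stochastic} where you re-derive it via BDG and Gronwall, but that is the same fact.
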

\begin{proof}
Fix $t\in[0,T]$ and $x,y\in\mathbb{R}.$ By Lipschitz continuity
of $f$ and $g$ there exists a constant $C>0$ such that 
\begin{eqnarray*}
|v^{\pi}(t,x)-v^{\pi}(t,y)| & \le & \sup_{\tau\in\mathcal{T}_{t,T}}\mathbb{E}\left[\int_{t}^{\tau}\left|f(s,X_{s}^{t,x})-f(s,X_{s}^{t,y})\right|ds+\left|g(\tau,X_{\tau}^{t,x})-g(\tau,X_{\tau}^{t,y})\right|\right]\\
 & \le & C\mathbb{E}\left[\sup_{s\in[t,T]}\left|X_{s}^{t,x}-X_{s}^{t,y}\right|\right].
\end{eqnarray*}
By the well-known moment estimate for solutions of stochastic differential
equations (see e.g. \citet[Theorem 3.2]{kunita2004stochastic}) there
exists a constant $\tilde{C}$ such that $\mathbb{E}\left[\sup_{s\in[t,T]}\left|X_{s}^{t,x}-X_{s}^{t,y}\right|\right]\le\tilde{C}|x-y|$.
This yields the claim.
\end{proof}
The next result shows that under the single-crossing condition only
cut-off regions are strictly implementable.
\begin{prop}
\label{prop:only_cut-offs_implementable}Assume that the single crossing
condition holds true. For every transfer $\pi$,
\begin{enumerate}
\item the stopping region $D_{t}^{\pi}$ is a cut-off region
\item and thus if $A$ is strictly implemented by $\pi$ then $A$ is a
cut-off region.
\end{enumerate}
\end{prop}
\begin{proof}
Fix $t\in[0,T].$ First observe that the single crossing condition
implies that $x\mapsto v^{\pi}(t,x)-g(t,x)$ is non-increasing. Indeed,
Itô's formula applied to $g(\cdot,X)$ yields 
\[
W(t,x,\tau)=\mathbb{E}\left[\int_{t}^{\tau}\left(f(s,X_{s}^{t,x})+(\partial_{t}+\mathcal{L})g(s,X_{s}^{t,x})\right)ds+g(t,x)+\int_{t}^{\tau}g_{x}(s,X_{s}^{t,x})\sigma(s,X_{s}^{t,x})dW_{s}\right]
\]
for every $x\in\mathbb{R}$ and $\tau\in\mathcal{T}_{t,T}$. Since
$g_{x}$ is bounded and $\sigma$ has linear growth the process $\int_{t}^{\cdot}g_{x}(s,X_{s}^{t,x})\sigma(s,X_{s}^{t,x})dW_{s}$
is a martingale. It follows from the comparison principle (\ref{eq:ComparisonPrincipleOriginal})
and the single crossing condition that for $x\le y$ 
\begin{eqnarray*}
v^{\pi}(t,x)-g(t,x) & = & \sup_{\tau\in\mathcal{T}_{t,T}}\mathbb{E}\left[\int_{t}^{\tau}\left(f(s,X_{s}^{t,x})+(\partial_{t}+\mathcal{L})g(s,X_{s}^{t,x})\right)\mathrm{d}s+\pi(\tau)\right]\\
 & \ge & \sup_{\tau\in\mathcal{T}_{t,T}}\mathbb{E}\left[\int_{t}^{\tau}\left(f(s,X_{s}^{t,y})+(\partial_{t}+\mathcal{L})g(s,X_{s}^{t,y})\right)\mathrm{d}s+\pi(\tau)\right]\\
 & = & v^{\pi}(t,y)-g(t,y).
\end{eqnarray*}
This implies that $y\in D_{t}^{\pi}$ if $y\ge x$ and $x\in D_{t}^{\pi}$.
Hence $D_{t}^{\pi}$ is an interval which is unbounded on the right.
By Lemma \ref{lem:continuity} the set $D_{t}^{\pi}$ is closed. Hence
there exists some $b(t)\in\overline{\mathbb{R}}$ such that $D_{t}^{\pi}=[b(t),\infty)$.
This implies that $A$ is a cut-off region since $A_{t}=D_{t}^{\pi}$
by the definition of strict implementability.
\end{proof}
We note that if we do not restrict attention to transfers $\pi$ which
depend only on time, but allow for the transfer to depend on the value
of the process $X,$ then any measurable set $A$ can be implemented.
To see this observe that when $\pi(t,x)=-g(x,t)+\mathbf{1}_{\{(x,t)\in A\}}$
the optimal stopping problem becomes
\[
\sup_{\tau\in\mathcal{T}}\mathbb{E}\left[g(X_{\tau},\tau)+\pi(X_{\tau},\tau)\right]=\sup_{\tau\in\mathcal{T}}\mathbb{E}\left[\mathbf{1}_{\{(X_{\tau},\tau)\in A\}}\right]
\]
to which $\tau_{A}=\inf\{t\colon(X_{t},t)\in A\}$ is a solution.
By not allowing for spatial dependence in the transfer the inverse
problem becomes harder to solve. While the assumption of spatial independence
makes our problem mathematically non-trivial it also has a clear economic
motivation in dynamic principal agent applications in economics where
the value of the process is privately observed by the agent and thus
the transfer chosen by the principal can not condition on it.

\section{Implementability of Cut-Off Regions\label{sec:Implementability-of-Cut-Off}}

In this section we prove that the converse implication of Proposition
\ref{prop:only_cut-offs_implementable} holds true as well: Every
regular cut-off region is implementable. We derive a closed form representation
for the transfer in terms of the reflected version of $X$ in Subsection
\ref{subsec:Constrained-Processes}. In Subsection \ref{subsec:Cut-offs_implementable}
we verify that this candidate solution to the inverse optimal stopping
problem indeed implements cut-off regions. The main properties of
the transfer are presented in Subsection \ref{subsec:Properties-of-the-transfer}.
In Subsection \ref{subsec:Uniqueness-of-The-transfer} we provide
a uniqueness result for transfers implementing a cut-off region.

\subsection{Reflected SDEs and a formal derivation of the candidate transfer\label{subsec:Constrained-Processes}}

A solution to a reflected stochastic differential equation (RSDE)
is a pair of processes $(\tilde{X},l)$, where the process $\tilde{X}$
evolves according to the dynamics of the associated SDE (\ref{eq:sde})
below a given barrier $b$ and is pushed below the barrier by the
process $l$ whenever it tries to exceed $b$. Next we give a formal
definition.
\begin{defn}
\label{def:rsde}Let $b$ be a càdlàg barrier, $t\in[0,T]$ a fixed
point in time and $\tilde{\xi}\le b(t)$ a $\mathcal{F}_{t}-$measurable
square-integrable random variable. A pair $(\tilde{X},l)$ of adapted
processes (with càdlàg trajectories) is called a (strong) solution
to the stochastic differential equation (\ref{eq:sde}) reflected
at $b$ with initial condition $(t,\tilde{\xi})$ if it satisfies
the following properties.

\begin{enumerate}
\item $\tilde{X}$ is constrained to stay below the barrier, i.e. $\tilde{X}_{s}\le b(s)$
almost surely for every $s\in[t,T]$.
\item For every $s\in[t,T]$ the following integral equation holds almost
surely 
\begin{equation}
\tilde{X}_{s}=\tilde{\xi}+\int_{t}^{s}\mu(r,\tilde{X}_{r})\mathrm{d}r+\int_{t}^{s}\sigma(r,\tilde{X}_{r})\mathrm{d}W_{r}-l_{s}\,.\label{eq:refldiff}
\end{equation}
\item The process $l$ is non-decreasing and only increases when $\tilde{X}_{t}=b(t),$
i.e.
\begin{equation}
\int_{t}^{T}(b(s)-\tilde{X}_{s})\mathrm{d}l_{s}=0\,.\label{eq:minimality}
\end{equation}
\end{enumerate}
\end{defn}
To stress the dependence of $\tilde{X}$ on the initial value we sometimes
write $\tilde{X}^{t,\tilde{\xi}}$. 
\begin{rem}
Consider the situation where $b$ has a downward jump at time $t$
and $\tilde{X}$ is above $b(t)$ shortly before time $t$, i.e. $\tilde{X}_{t-}(\omega)\in(b(t),b(t-)]$
for some $\omega\in\Omega$. Since $\tilde{X}_{t}\le b(t)$ the reflected
process $\tilde{X}$ has a downward jump at time $t$ as well. Equation
(\ref{eq:refldiff}) implies that $l$ has an upward jump at time
$t.$ Then Equation (\ref{eq:minimality}) yields that $\tilde{X}$
is on the barrier at time $t$, i.e. $\tilde{X}_{t}=b(t)$. Hence,
the jump of $b$ is rather absorbed by $\tilde{X}$ than truly reflected
(which would mean $\tilde{X}_{t}=2b(t)-\tilde{X}_{t-}$). In this
sense $\tilde{X}$ is the maximal version of $X$ which stays below
$b$. This property is crucial in the proof of Theorem \ref{thm:Cut-offs_implementable}.
Existence and uniqueness of $\tilde{X}$ are established in \citet{rutkowski1980stochastic}.
We also refer to \citet{Slominski20101701} who allow for general
modes of reflection. For results about RSDEs with ``true'' jump
reflections we refer to \citet{chaleyat1980reflexion}.
\end{rem}

\subsubsection*{A formal derivation}

Here we establish the link between inverse optimal stopping problems
and RSDEs and derive the representation of a transfer implementing
a cut-off region. To this end assume that the cut-off region $A=[b(t),\infty)$
is implemented by a transfer $\pi$. Without loss of generality we
assume that $\pi(T)=0$ (else take $\tilde{\pi}(t)=\pi(t)-\pi(T)$).
Since we are only interested in a formal derivation here, we make
some regularity assumptions. We assume that the value function of
the stopping problem (\ref{eq:stopping_problem_with_transfer}) is
smooth ($v^{\pi}\in C^{1,2}([0,T]\times\mathbb{R})$) and that $b$
is continuous such that $\tilde{X}$ is continuous as well. Then $v^{\pi}$
satisfies (see e.g. \citep[Chapter IV]{pevskir2006optimal})
\begin{eqnarray*}
\min\left\{ -(\partial_{t}+\mathcal{L})v^{\pi}-f,v^{\pi}-\left(g+\pi\right)\right\}  & = & 0\\
v^{\pi}(T,\cdot) & = & g(T,\cdot)
\end{eqnarray*}
and $b$ is the free boundary of this variational partial differential
equation. In particular, below the cut-off $b$ the value function
$v^{\pi}$ satisfies the continuation equation 
\[
(\partial_{t}+\mathcal{L})v^{\pi}(t,x)=-f(t,x)
\]
for all $x\le b(t).$ On the cut-off, $v^{\pi}$ satisfies the boundary
condition $v^{\pi}(t,b(t))=g(t,b(t))+\pi(t)$ for all $t\in[0,T].$
Moreover, if $b$ is sufficiently regular the smooth fit principle
\[
v_{x}(t,b(t))=g_{x}(t,b(t))
\]
holds for all $t\in[0,T]$ (see e.g. \citet[Section 9.1]{pevskir2006optimal}).
Then Itô's formula implies
\begin{eqnarray*}
\mathbb{E}\left[g(T,\tilde{X}_{T}^{t,b(t)})\right] & = & \mathbb{E}\left[v^{\pi}(T,\tilde{X}_{T}^{t,b(t)})\right]\\
 & = & v^{\pi}(t,b(t))+\mathbb{E}\left[\int_{t}^{T}(\partial_{t}+\mathcal{L})v^{\pi}(s,\tilde{X}_{s}^{t,b(t)})ds-\int_{t}^{T}v_{x}(s,\tilde{X}_{s}^{t,b(t)})dl_{s}\right]\\
 & = & g(t,b(t))+\pi(t)-\mathbb{E}\left[\int_{t}^{T}f(s,\tilde{X}_{s}^{t,b(t)})ds+\int_{t}^{T}g_{x}(s,\tilde{X}_{s}^{t,b(t)})dl_{s}\right].
\end{eqnarray*}
A further application of Itô's formula yields the following representation
of $\pi$
\begin{eqnarray}
\pi(t) & = & \mathbb{E}\left[g(T,\tilde{X}_{T}^{t,b(t)})+\int_{t}^{T}f(s,\tilde{X}_{s}^{t,b(t)})ds+\int_{t}^{T}g_{x}(s,\tilde{X}_{s}^{t,b(t)})dl_{s}\right]-g(t,b(t))\nonumber \\
 & = & \mathbb{E}\left[\int_{t}^{T}f(s,\tilde{X}_{s}^{t,b(t)})+(\partial_{t}+\mathcal{L})g(s,\tilde{X}_{s}^{t,b(t)})\mathrm{d}s\right]\,.\label{eq:candidate_transfer}
\end{eqnarray}
In Theorem \ref{thm:Cut-offs_implementable} below we verify that
Equation (\ref{eq:candidate_transfer}) indeed leads to a transfer
$\pi$ implementing $A$. The proof does neither rely on any analytic
methods nor on results from the theory of partial differential equations.
Instead we employ purely probabilistic arguments based on the single
crossing condition and comparison results for SDEs and RSDEs. This
methodology requires weak regularity assumptions on the model parameters.
In particular there is no ellipticity condition on $\sigma.$ 

\subsubsection*{Properties of RSDEs}

The next proposition proves auxiliary results about RSDEs which we
will use in the proof of Theorem \ref{thm:Cut-offs_implementable}.
There is a broad literature on RSDEs including comparison results
(see e.g. \citet{bo2007strong}). To the best of our knowledge the
comparison principles for RSDE with càdlàg barriers and summable downward
jumps as needed for our result have not been shown before. While all
results follow by standard arguments we give a proof in the Appendix
for the convenience of the reader. For the existence and uniqueness
result we refer to \citet{rutkowski1980stochastic}.
\begin{prop}
\label{prop:refldiff}For every regular\footnote{see Definition \ref{def-cut-off}}
cut-off $b$ there exists a unique strong solution $\tilde{X}$ to
the RSDE (\ref{eq:refldiff}). The process $l$ is given by 
\begin{equation}
l_{s}=\sup_{t\le r\le s}(\tilde{\xi}+\int_{t}^{r}\mu(u,\tilde{X}_{u})\mathrm{d}u+\int_{t}^{r}\sigma(u,\tilde{X}_{u})\mathrm{d}W_{u}-b(r))^{+}.\label{eq:loctime}
\end{equation}
Moreover, $\tilde{X}$ satisfies

\begin{enumerate}
\item \label{enu:momest1}(Square Integrability) $\mathbb{E}\left[\sup_{t\leq s\leq T}(\tilde{X}_{s}^{t,\xi})^{2}\right]<\infty$
for all $t\in[0,T]$.
\item \label{enu:minimality}(Minimality) $\tilde{X}_{s}^{t,\xi}1_{\{s<\tau_{b}\}}=X_{s}^{t,\xi}1_{\{s<\tau_{b}\}}$
a.s. for all $s\in[t,T]$.
\item \label{enu:compprinc}(Comparison Principle for the Reflected Process)If
$\xi_{1}\le\xi_{2}$ a.s., then for $s\in[t,T]$ we have $\tilde{X}_{s}^{t,\xi_{1}}\le\tilde{X}_{s}^{t,\xi_{2}}$
a.s.
\item \label{enu:momest2}(Moment Estimate) For $\xi_{1},\xi_{2}\in L^{2}(\mathcal{F}_{t})$
there exists a constant $K>0$ such that $\mathbb{E}\left[\sup_{t\le r\le s}|\tilde{X}_{r}^{t,\xi_{1}}-\tilde{X}_{r}^{t,\xi_{2}}|^{p}|\mathcal{F}_{t}\right]\le K|\xi_{1}-\xi_{2}|^{p}$
a.s. for all $s\in[t,T]$ and $p=1,2.$
\item \label{enu:comprinc2}(Comparison Principle for the Original Process)
$\tilde{X}_{s}^{t,\xi}\leq X_{s}^{t,\xi}$ a.s. for all $s\in[t,T]$.
\item \label{enu:conv}(Left continuity) Let $t\in[0,T]$ and $x\le b(t)\wedge b(t-)$.
Then $\tilde{X}_{t}^{s,y\wedge b(s)}\to x$ in $L^{2}$ for $s\nearrow t$
and $y\to x$.
\end{enumerate}
\end{prop}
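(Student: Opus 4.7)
The plan is to establish the six claims in sequence, relying on the Skorokhod-type representation (\ref{eq:loctime}), standard BDG/Gronwall estimates, and It\^{o}-Tanaka arguments.

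For (i), I would combine (\ref{eq:refldiff}) with (\ref{eq:loctime}) to dominate $\sup_{s\in[t,T]}|\tilde{X}_{s}|$ by $|\tilde\xi|+2\sup_{r\in[t,T]}|\int_{t}^{r}\mu(u,\tilde{X}_{u})\mathrm{d}u|+2\sup_{r\in[t,T]}|\int_{t}^{r}\sigma(u,\tilde{X}_{u})\mathrm{d}W_{u}|+\sup_{r\in[t,T]}|b(r)|$, then square, apply Doob/BDG and the linear-growth consequence of Lipschitz continuity of $\mu,\sigma$, and close the resulting $L^{2}$-estimate by Gronwall's lemma. Claim (ii) follows from pathwise uniqueness of the RSDE: on $\{s<\tau_{b}\}$ the original process satisfies $X^{t,\xi}_{r}<b(r)$ for all $r\in[t,s]$, so $(X^{t,\xi},0)$ trivially fulfils the constraint, SDE, and minimality conditions of Definition \ref{def:rsde} up to $\tau_b$, and hence coincides there with $(\tilde{X}^{t,\xi},l)$.

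For the comparison principles (iii) and (v), I would apply Tanaka's formula to the positive part of the difference. Setting $Z_{s}=\tilde{X}^{t,\xi_{1}}_{s}-\tilde{X}^{t,\xi_{2}}_{s}$, the crucial observation is that when $Z_{s}>0$ both reflected processes satisfy $\tilde{X}^{t,\xi_{i}}_{s}\le b(s)$ while the first strictly exceeds the second, forcing $\tilde{X}^{t,\xi_{1}}_{s}<b(s)$; the minimality condition (\ref{eq:minimality}) then yields $\int 1_{\{Z>0\}}\mathrm{d}l^{(1)}=0$, whereas $\int 1_{\{Z>0\}}\mathrm{d}l^{(2)}\ge 0$, so the reflection contributes non-positively to $Z^{+}$; Lipschitz estimates combined with BDG and Gronwall then give $\mathbb{E}[(Z_{s}^{+})^{2}]=0$. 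The same template applied to $(\tilde{X}^{t,\xi}-X^{t,\xi})^{+}$ proves (v), using that $-\int 1_{\{\tilde{X}>X\}}\mathrm{d}l\le 0$. Claim (iv) is proved analogously by applying It\^{o} (or Tanaka for $p=1$) to $|\tilde{X}^{t,\xi_{1}}-\tilde{X}^{t,\xi_{2}}|^{p}$; since increases of $l^{(i)}$ occur only when $\tilde{X}^{t,\xi_{i}}=b\ge\tilde{X}^{t,\xi_{j}}$, the reflection contribution has the sign that absorbs into the non-positive terms of It\^{o}'s formula, and the remainder is handled by the Lipschitz estimates, BDG, and conditional Gronwall.

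The main obstacle is (vi), where possible downward jumps of $b$ at time $t$ must be absorbed in a controlled way. For deterministic $s<t$ and $y\in\mathbb{R}$, write
\[
\tilde{X}^{s,y\wedge b(s)}_{t}=y\wedge b(s)+\int_{s}^{t}\mu(r,\tilde{X}_{r})\mathrm{d}r+\int_{s}^{t}\sigma(r,\tilde{X}_{r})\mathrm{d}W_{r}-l_{t}.
\]
As $s\nearrow t$ and $y\to x$ the drift and diffusion increments vanish in $L^{2}$ by part (i), BDG and dominated convergence; right continuity of $b$ together with the hypothesis $x\le b(t-)$ yields $y\wedge b(s)\to x\wedge b(t-)=x$. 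For the local time term I would use (\ref{eq:loctime}) to dominate $l_{t}$ in $L^{2}$ by $\sup_{s\le r\le t}(y\wedge b(s)-b(r))^{+}$ plus a vanishing contribution from the SDE increments; right continuity and the summable-downward-jump property of $b$ imply $\inf_{r\in[s,t]}b(r)\to b(t)\wedge b(t-)$, so the hypothesis $x\le b(t)\wedge b(t-)$ forces this positive part to vanish in $L^{2}$. Combining the three limits gives $\tilde{X}^{s,y\wedge b(s)}_{t}\to x$ in $L^{2}$.
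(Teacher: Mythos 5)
Your overall template for items (iii)--(v) --- apply a Tanaka/Meyer--It\^{o} type formula to $(Z^{+})^{p}$ with $Z$ the difference of the two reflected processes, kill the reflection terms using the minimality condition (\ref{eq:minimality}), and close with Lipschitz estimates, BDG and Gronwall --- is the same route the paper follows (it works with the function $x\mapsto\max(0,x)^{2}$). However, two points in your execution are not right. There is a sign slip: with $Z=\tilde X^{t,\xi_{1}}-\tilde X^{t,\xi_{2}}$ and $\xi_{1}\le\xi_{2}$, on $\{Z>0\}$ both processes are $\le b$ and $\tilde X^{t,\xi_{1}}>\tilde X^{t,\xi_{2}}$, so it is $\tilde X^{t,\xi_{2}}$ that is strictly below the barrier; minimality then gives $\int 1_{\{Z>0\}}\,\mathrm{d}l^{(2)}=0$, and the term that closes the estimate is $-\int 1_{\{Z>0\}}Z_{r}\,\mathrm{d}l^{(1)}_{r}\le 0$. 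Your version ($\tilde X^{t,\xi_{1}}<b$, hence $\int 1_{\{Z>0\}}\,\mathrm{d}l^{(1)}=0$) leaves the contribution $+\int 1_{\{Z>0\}}\,\mathrm{d}l^{(2)}$, which is nonnegative --- the wrong sign, so the estimate does not close as written.

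The more substantive gap is that your Tanaka argument is silent about the jump terms. Since $b$ is only c\`adl\`ag, $\tilde X$ and $l$ jump at the downward jump times of $b$, and the Meyer--It\^{o} expansion of $\max(0,Z_{s})^{2}$ contains a series of jump contributions that you do not discuss. This is exactly where the \emph{regularity} hypothesis (c\`adl\`ag barrier with summable downward jumps) enters the paper's proof: summability is used to obtain $\sum_{r}1_{\{Z_{r-}>0\}}|Z_{r-}\Delta Z_{r}|<\infty$ so the Meyer--It\^{o} expansion can be rearranged, and a short contradiction argument (if $\max(0,Z_{r})^{2}>\max(0,Z_{r-})^{2}$ then $\Delta l^{(2)}_{r}>0$, whence $\tilde X^{t,\xi_{2}}_{r}=b(r)$, contradicting $\tilde X^{t,\xi_{1}}_{r}>\tilde X^{t,\xi_{2}}_{r}$ while $\tilde X^{t,\xi_{1}}_{r}\le b(r)$) shows that the jump sum is nonpositive. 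Without controlling these jumps the Gronwall bound simply does not close for a discontinuous barrier, and indeed the summable-downward-jump hypothesis appears nowhere in your argument even though it is essential. A minor further gap: you rely on the Skorokhod representation (\ref{eq:loctime}) throughout but do not derive it; the paper establishes it via uniqueness of solutions to the associated Skorokhod problem.
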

Using similar arguments as in \citet[Chapter V Section 6]{protter2005stochastic}
one can show that $\tilde{X}$ satisfies the strong Markov property.
For $s\ge t$ we define the transition kernel $\tilde{P}_{t,s}$ by
\[
\tilde{P}_{t,s}\varphi(t,x)=\mathbb{E}\left[\varphi(s,\tilde{X}_{s}^{t,x})\right]
\]
for any Borel measurable, bounded function $\varphi:[0,T]\times\mathbb{R}\to\mathbb{R}$.
Then $\tilde{X}$ satisfies for any stopping time $\tau\in\mathcal{T}$
and $u\ge0$
\begin{equation}
\mathbb{E}\left[\varphi(\tau+u,\tilde{X}_{\tau+u})\,|\,\mathcal{F}_{\tau}\right]=\tilde{P}_{\tau,\tau+u}\varphi(\tau,\tilde{X}_{\tau}).\label{eq:strong_markov}
\end{equation}
Moreover, uniqueness of strong solutions of RSDEs implies the following
flow property of $\tilde{X}$. For $t\le r\le s$ and $x\in\mathbb{R}$
we have a.s. 
\begin{equation}
\tilde{X}_{s}^{t,x}=\tilde{X}_{s}^{r,\tilde{X}_{r}^{t,x}}.\label{eq:flow_prop}
\end{equation}

\subsection{Regular Cut-Off Regions are Implementable\label{subsec:Cut-offs_implementable}}

In this section we prove our main theorem stating that every regular
cut-off region is implemented by the transfer derived in Subsection
\ref{subsec:Constrained-Processes}.
\begin{thm}
\label{thm:Cut-offs_implementable}Assume that the single crossing
condition is satisfied. Let $A$ be a regular cut-off region with
boundary $b$. Then it is implemented by the transfer 
\begin{equation}
\pi(t)=\mathbb{E}\left[\int_{t}^{T}f(s,\tilde{X}_{s}^{t,b(t)})\mathrm{+(\partial_{t}+\mathcal{L})g(s,\tilde{X}_{s}^{t,b(t)})d}s\right]\,.\label{eq:DefinitionPayment-2}
\end{equation}
\end{thm}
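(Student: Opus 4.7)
The plan is to reduce everything to the auxiliary functional
\[
J(t,x) := \mathbb{E}\!\left[\int_t^T h(s,\tilde{X}_s^{t,x})\,\mathrm{d}s\right], \qquad h(s,y) := f(s,y) + (\partial_t+\mathcal{L})g(s,y),
\]
defined for $x\le b(t)$, so that the transfer satisfies $\pi(t)=J(t,b(t))$. By the single crossing condition $h$ is non-increasing in $y$, and combining this with the comparison principle of Proposition~\ref{prop:refldiff}(\ref{enu:compprinc}) shows that $J(t,\cdot)$ is non-increasing on $(-\infty,b(t)]$.

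First I will apply It\^o's formula to $g(\cdot,X^{t,x})$: since $g_x$ is bounded and $\sigma$ has linear growth, the stochastic integral is a true martingale, so for every $\tau\in\mathcal{T}_{t,T}$,
\[
W(t,x,\tau)+\mathbb{E}[\pi(\tau)] = g(t,x) + \mathbb{E}\!\left[\int_t^\tau h(s,X_s^{t,x})\,\mathrm{d}s + \pi(\tau)\right].
\]
Combining the strong Markov property \eqref{eq:strong_markov} with the flow property \eqref{eq:flow_prop} of $\tilde X$ yields the tower-type identity
\[
J(t,x) = \mathbb{E}\!\left[\int_t^\tau h(s,\tilde X_s^{t,x})\,\mathrm{d}s + J\bigl(\tau,\tilde X_\tau^{t,x}\bigr)\right]
\]
for every $\tau\in\mathcal{T}_{t,T}$ and $x\le b(t)$; this is the reflected-process analogue of dynamic programming that will drive the comparison.

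The heart of the argument will be the pathwise estimate
\[
\mathbb{E}\!\left[\int_t^\tau h(s,X_s^{t,x})\,\mathrm{d}s + J(\tau,b(\tau))\right] \le J\bigl(t,\,x\wedge b(t)\bigr).
\]
To prove it I will compare the two integrals term by term: on $[t,\tau)$, Proposition~\ref{prop:refldiff}(\ref{enu:comprinc2}) gives $X_s^{t,x}\ge \tilde X_s^{t,\,x\wedge b(t)}$, so single crossing yields $h(s,X_s^{t,x})\le h(s,\tilde X_s^{t,\,x\wedge b(t)})$; at time $\tau$, since $\tilde X_\tau^{t,\,x\wedge b(t)}\le b(\tau)$ and $J(\tau,\cdot)$ is non-increasing, $J(\tau,b(\tau))\le J(\tau,\tilde X_\tau^{t,\,x\wedge b(t)})$. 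Summing and invoking the tower identity gives the bound. Combined with the It\^o rewriting above, this produces $W(t,x,\tau)+\mathbb{E}[\pi(\tau)]\le g(t,x)+J(t,x\wedge b(t))$ uniformly in $\tau$, which is the upper bound that the candidate stopping time $\tau_A^{t,x}$ must attain.

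The final task will be to verify equality at $\tau=\tau_A^{t,x}$. On $\{s<\tau_A^{t,x}\}$, the minimality statement Proposition~\ref{prop:refldiff}(\ref{enu:minimality}) identifies $X_s^{t,x}$ with $\tilde X_s^{t,\,x\wedge b(t)}$, equating the two integrands; for $x>b(t)$ the time $\tau_A$ equals $t$ and the identity is trivial. I then need $\tilde X_{\tau_A}^{t,\,x\wedge b(t)}=b(\tau_A)$ so that the flow property identifies the two reflected trajectories after $\tau_A$. For continuous $b$ this is immediate from path-continuity of $X$; the delicate case, and what I expect to be the main obstacle, is when $b$ has a downward jump at $\tau_A$ and $X$ has overshot. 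This will be handled by the absorption property noted after Definition~\ref{def:rsde}: such a jump forces $\tilde X$ to land exactly on the new barrier level, a fact that rests on the left-continuity statement Proposition~\ref{prop:refldiff}(\ref{enu:conv}) together with the summability of downward jumps built into the definition of a regular cut-off.
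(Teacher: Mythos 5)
Your proposal is correct and uses essentially the same ingredients and inequalities as the paper's proof: the expression $W(t,x,\tau)-g(t,x)=\mathbb{E}\left[\int_t^\tau h(s,X_s^{t,x})\,\mathrm{d}s\right]$ via It\^o, the representation $\pi(\tau)=\mathbb{E}\left[\int_\tau^T h(s,\tilde X_s^{\tau,b(\tau)})\,\mathrm{d}s\,\middle|\,\mathcal{F}_\tau\right]$ via the strong Markov property, the comparison principles of Proposition \ref{prop:refldiff}, the flow property, the minimality property, and the absorption identity $\tilde X_{\tau_A}=b(\tau_A)$. The packaging is cleaner than the paper's (a single uniform bound $W(t,x,\tau)+\mathbb{E}[\pi(\tau)]\le g(t,x)+J(t,x\wedge b(t))$ plus an attainment check, versus the paper's three-part case analysis over $x\gtrless b(t)$ and $\tau\gtrless\tau_b$), but the argument is the same proof in different bookkeeping rather than a genuinely different route.

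One small imprecision in your final step: the identity $\tilde X_{\tau_A}^{t,x\wedge b(t)}=b(\tau_A)$ on $\{\tau_A<T\}$ does not rest on Property (\ref{enu:conv}) or on the summability of downward jumps. It follows directly from the defining minimality condition (\ref{eq:minimality}) for the RSDE together with the c\`adl\`ag property: the left limit $\tilde X_{\tau_A-}=X_{\tau_A-}=X_{\tau_A}$ lies in $[b(\tau_A),b(\tau_A-)]$, the constraint forces $\tilde X_{\tau_A}\le b(\tau_A)$, and any resulting jump in $\tilde X$ must be a jump in $l$, which by (\ref{eq:minimality}) can only land on the barrier. Summability of downward jumps is used for existence of $\tilde X$, and Property (\ref{enu:conv}) is used for the left-continuity analysis of $\pi$ in Proposition \ref{prop:Properties_transfer}; neither is the mechanism behind the absorption identity you invoke.
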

\begin{proof}
First observe that the cut-off rule $\tau_{b}^{t,x}$ is a stopping
time for all $(t,x)\in[0,T]\times\mathbb{R}$. Indeed, since $X$
has continuous paths and $b$ is right-continuous, the Début-theorem
(see e.g. \citet[Chapter IV, Section 50]{dellacherie1978probabilities})
implies $\tau_{b}^{t,x}\in\mathcal{T}_{t,T}$.

Let $\pi$ be given by Equation (\ref{eq:DefinitionPayment-2}). For
the boundedness and measurability of $\pi$ we refer to Proposition
\ref{prop:Properties_transfer}. We set $h=f+(\partial_{t}+\mathcal{L})g$.
As in the proof of Proposition \ref{prop:only_cut-offs_implementable}
we have 
\[
W(t,x,\tau)=g(t,x)+\mathbb{E}\left[\int_{t}^{\tau}h(s,X_{s}^{t,x})ds\right].
\]
Note that we can write $\pi$ in terms of the transition function
$\tilde{P}$ of $\tilde{X}$ as follows
\[
\pi(t)=\int_{t}^{T}\tilde{P}_{t,s}h(t,b(t))\mathrm{d}s.
\]
The strong Markov property (Equation (\ref{eq:strong_markov})) of
$\tilde{X}$ implies
\[
\tilde{P}_{\tau,\tau+u}h(\tau,b(\tau))=\mathbb{E}\left[h(\tau+u,\tilde{X}_{\tau+u}^{\tau,b(\tau)})\,|\,\mathcal{F}_{\tau}\right]
\]
 for any stopping time $\tau\in\mathcal{T}$ and $u\ge0$. Hence we
have 
\begin{equation}
\pi(\tau)=\mathbb{E}\left[\int_{\tau}^{T}h(s,\tilde{X}_{s}^{\tau,b(\tau)})\mathrm{d}s|\mathcal{F}_{\tau}\right]\,.\label{eq:pitau}
\end{equation}

Fix $t\in[0,T]$ and $x\ge b(t)$. Let $\tau\in\mathcal{T}_{t,T}$
be an arbitrary stopping time. The comparison principle between the
original and the reflected process (Property (\ref{enu:comprinc2}))
implies $X_{s}^{t,x}\ge X_{s}^{t,b(t)}\ge\tilde{X}_{s}^{t,b(t)}$
a.s. for every $s\in[t,T].$ From the flow property (Equation (\ref{eq:flow_prop}))
and the comparison principle for reflected processes (Property (\ref{enu:compprinc}))
follows that $\tilde{X}_{s}^{t,b(t)}=\tilde{X}_{s}^{\tau,\tilde{X}_{\tau}^{t,b(t)}}\le\tilde{X}_{s}^{\tau,b(\tau)}$
a.s. for every $s\in[\tau,T]$. Therefore the single crossing condition
implies
\begin{eqnarray*}
\mathbb{E}\left[\int_{t}^{\tau}h(s,X_{s}^{t,x})\mathrm{d}s+\pi(\tau)\right] & = & \mathbb{E}\left[\int_{t}^{\tau}h(s,X_{s}^{t,x})\mathrm{d}s+\int_{\tau}^{T}h(s,\tilde{X}_{s}^{\tau,b(\tau)})\mathrm{d}s\right]\\
 & \le & \mathbb{E}\left[\int_{t}^{\tau}h(s,\tilde{X}_{s}^{t,b(t)})\mathrm{d}s+\int_{\tau}^{T}h(s,\tilde{X}_{s}^{t,b(t)})\mathrm{d}s\right]\\
 & = & \pi(t).
\end{eqnarray*}
This implies $W(t,x,\tau)+\mathbb{E}\left[\pi(\tau)\right]\le W(t,x,t)+\pi(t)$.
Hence $\tau_{b}^{t,x}=t$ is optimal in (\ref{eq:stopping_problem_with_transfer})
as claimed. 

In the second step fix $x<b(t)$ and let $\tau\in\mathcal{T}_{t,T}$
be an arbitrary stopping time. To shorten notation we write $\tau_{b}=\tau_{b}^{t,x}$.
First, we prove that the stopping $\min\{\tau,\tau_{b}\}$ performs
at least as well as $\tau.$ By (\ref{eq:pitau}) we have 
\begin{eqnarray*}
\mathbb{E}\left[1_{\{\tau_{b}<\tau\}}\pi(\tau)\right] &  & =\mathbb{E}\left[1_{\{\tau_{b}<\tau\}}\mathbb{E}\left[\int_{\tau}^{T}h(s,\tilde{X}_{s}^{\tau,b(\tau)})\mathrm{d}s\,|\,\mathcal{F}_{\tau}\right]\right]=\mathbb{E}\left[1_{\{\tau_{b}<\tau\}}\int_{\tau}^{T}h(s,\tilde{X}_{s}^{\tau,b(\tau)})\mathrm{d}s\right].
\end{eqnarray*}
 This leads to
\begin{flalign*}
\mathbb{E} & \left[1_{\{\tau_{b}<\tau\}}\left(\int_{t}^{\tau}h(s,X_{s}^{t,x})\mathrm{d}s+\pi(\tau)\right)\right]\\
 & =\mathbb{E}\left[1_{\{\tau_{b}<\tau\}}\left(\int_{t}^{\tau_{b}}h(s,X_{s}^{t,x})\mathrm{d}s+\int_{\tau_{b}}^{\tau}h(s,X_{s}^{t,x})\mathrm{d}s+\int_{\tau}^{T}h(s,\tilde{X}_{s}^{\tau,b(\tau)})\mathrm{d}s\right)\right].
\end{flalign*}
By construction of the reflected process $\tilde{X}$ we have $\tilde{X}_{\tau_{b}}^{t,x}=b(\tau_{b})$.
The comparison principle between the original and the reflected process
(Property (\ref{enu:comprinc2})) and the flow property of reflected
processes (Equation (\ref{eq:flow_prop})) imply almost surely 
\[
\tilde{X}_{s}^{\tau_{b},b(\tau_{b})}=\tilde{X}_{s}^{\tau_{b},\tilde{X}_{\tau_{b}}^{t,x}}=\tilde{X}_{s}^{t,x}\le X_{s}^{t,x}
\]
for $s\ge\tau_{b}$. Since $\tilde{X}_{\tau}^{\tau_{b},b(\tau_{b})}\le b(\tau)$
we have on the set $\{\tau>\tau_{b}\}$ 
\[
\tilde{X}_{s}^{\tau,b(\tau)}\ge\tilde{X}_{s}^{\tau,\tilde{X}_{\tau}^{\tau_{b},b(\tau_{b})}}=\tilde{X}_{s}^{\tau_{b},b(\tau_{b})}
\]
for all $s\ge\tau$. These two inequalities combined with the monotonicity
of $h$ yield that 
\begin{flalign*}
\mathbb{E} & \left[1_{\{\tau_{b}<\tau\}}\left(\int_{t}^{\tau}h(s,X_{s}^{t,x})\mathrm{d}s+\pi(\tau)\right)\right]\\
 & \le\mathbb{E}\left[1_{\{\tau_{b}<\tau\}}\left(\int_{t}^{\tau_{b}}h(s,X_{s}^{t,x})\mathrm{d}s+\int_{\tau_{b}}^{\tau}h(s,\tilde{X}_{s}^{\tau_{b},b(\tau_{b})})\mathrm{d}s+\int_{\tau}^{T}h(s,\tilde{X}_{s}^{\tau_{b},b(\tau_{b})})\mathrm{d}s\right)\right]\\
 & =\mathbb{E}\left[1_{\{\tau_{b}<\tau\}}\left(\int_{t}^{\tau_{b}}h(s,X_{s}^{t,x})\mathrm{d}s+\pi(\tau_{b})\right)\right]\,.
\end{flalign*}
Consequently using the stopping time $\min\{\tau,\tau_{b}\}$ is at
least as good as using $\tau$ 
\begin{flalign*}
W(t,x,\tau)+\mathbb{E}\left[\pi(\tau)\right] & =g(t,x)+\mathbb{E}\left[\int_{t}^{\tau}h(s,X_{s}^{t,x})\mathrm{d}s+\pi(\tau)\right]\\
 & \le g(t,x)+\mathbb{E}\left[\int_{t}^{\tau\wedge\tau_{b}}h(s,X_{s}^{t,x})\mathrm{d}s+\pi(\min\{\tau,\tau_{b}\})\right]\\
 & =W(t,x,\min\{\tau,\tau_{b}\})+\mathbb{E}\left[\pi(\min\{\tau,\tau_{b}\})\right].
\end{flalign*}
Thus it suffices to consider stopping rules $\tau\leq\tau_{b}$. In
this case we have
\begin{flalign*}
\mathbb{E} & \left[\int_{t}^{\tau}h(s,X_{s}^{t,x})\mathrm{d}s+\pi(\tau)\right]\\
 & =\mathbb{E}\left[\int_{t}^{\tau}h(s,X_{s}^{t,x})\mathrm{d}s+\int_{\tau}^{\tau_{b}}h(s,\tilde{X}_{s}^{\tau,b(\tau)})\mathrm{d}s+\int_{\tau_{b}}^{T}h(s,\tilde{X}_{s}^{\tau,b(\tau)})\mathrm{d}s\right].
\end{flalign*}
From the comparison principle for reflected processes (Property (\ref{enu:compprinc}))
and the flow property Equation (\ref{eq:flow_prop}) follows $\tilde{X}_{s}^{t,x}=\tilde{X}_{s}^{\tau,\tilde{X}_{\tau}^{t,x}}\le\tilde{X}_{s}^{\tau,b(\tau)}$
for all $s\ge\tau.$ By the minimality property of reflected processes
(Property (\ref{enu:minimality})) we have that $X_{s}^{t,x}=\tilde{X}_{s}^{t,x}$
for all $s<\tau_{b}$. Similar considerations as above yield
\[
\tilde{X}_{s}^{\tau_{b},b(\tau_{b})}=\tilde{X}_{s}^{\tau_{b},\tilde{X}_{\tau_{b}}^{t,x}}=\tilde{X}_{s}^{t,x}=\tilde{X}_{s}^{\tau,\tilde{X}_{\tau}^{t,x}}\le\tilde{X}_{s}^{\tau,b(\tau)}
\]
a.s. for $s\ge\tau_{b}$. The monotonicity of $h$ implies 
\begin{flalign*}
\mathbb{E}\left[\int_{t}^{\tau}h(s,X_{s}^{t,x})\mathrm{d}s+\pi(\tau)\right] & \le\mathbb{E}\left[\int_{t}^{\tau}h(s,X_{s}^{t,x})\mathrm{d}s+\int_{\tau}^{\tau_{b}}h(s,X_{s}^{t,x})\mathrm{d}s+\int_{\tau_{b}}^{T}h(s,\tilde{X}_{s}^{\tau_{b},b(\tau_{b})})\mathrm{d}s\right]\\
 & =\mathbb{E}\left[\int_{t}^{\tau_{b}}h(s,X_{s}^{t,x})\mathrm{d}s+\pi(\tau_{b})\right]
\end{flalign*}
and hence $W(t,x,\tau)+\mathbb{E}\left[\pi(\tau)\right]\le W(t,x,\tau_{b})+\mathbb{E}\left[\pi(\tau_{b})\right]$.
This completes the proof of implementability.
\end{proof}

Theorem \ref{thm:Cut-offs_implementable} shows that every cut-off
stopping time is implementable under the single crossing condition
we imposed. We note that this result does not hold without the single
crossing condition. To see this consider as an example a payoff $g(x,t)=h(|x|,t)$
which is only a function of the absolute value of $x$ and a symmetric
diffusion process $\mu(x,t)=-\mu(-x,t)$ and $\sigma(x,t)=\sigma(-x,t)$.
Note, that such an example never satisfies the single crossing condition.
As for any $\pi:\mathbb{R}_{+}\to\mathbb{R}$ the optimal stopping
problem
\[
\sup_{\tau\in\mathcal{T}}\mathbb{E}\left[g(X_{\tau},\tau)+\pi(\tau)\right]
\]
is symmetric at zero it follows that the stopping set must be symmetric
around zero. Consequently, the agent does not only stop when the process
$X$ crosses a threshold from below, but also when $X$ crosses the
negative of this threshold from above. Hence, the optimal stopping
time is never of cut-off form, and no cut-off rule can be implemented.

In Proposition \ref{prop:only_cut-offs_implementable} we showed that
strictly implementable regions are necessarily of cut-off type. The
next result establishes the converse direction. Under the strict single
crossing condition cut-off regions are strictly implementable. 
\begin{cor}
\label{thm:strict_single_crossing_impl_strong_impl}If the strict
single crossing condition holds true, then a regular cut-off region
with barrier $b$ is strictly implemented by the transfer from Equation
(\ref{eq:DefinitionPayment-2}).
\end{cor}
\begin{proof}
We use the same notation as in the proof of Theorem \ref{thm:Cut-offs_implementable}.
Let $t\in[0,T]$ and $x<b(t)$. Then the right-continuity of $b$
and $\tilde{X}$ and the strict monotonicity of $h$ imply that 
\[
\mathbb{E}\left[\int_{t}^{\tau_{b}}h(s,\tilde{X}_{s}^{t,x})\mathrm{d}s\right]>\mathbb{E}\left[\int_{t}^{\tau_{b}}h(s,\tilde{X}_{s}^{t,b(t)})\mathrm{d}s\right]
\]
 Consequently we have
\begin{eqnarray*}
\mathbb{E}\left[\int_{t}^{\tau_{b}}h(s,X_{s}^{t,x})\mathrm{d}s+\pi(\tau_{b})\right] & = & \mathbb{E}\left[\int_{t}^{\tau_{b}}h(s,\tilde{X}_{s}^{t,x})\mathrm{d}s+\int_{\tau_{b}}^{T}h(s,\tilde{X}_{s}^{\tau_{b},b(\tau_{b})})\mathrm{d}s\right]\\
 & > & \mathbb{E}\left[\int_{t}^{\tau_{b}}h(s,\tilde{X}_{s}^{t,b(t)})\mathrm{d}s+\int_{\tau_{b}}^{T}h(s,\tilde{X}_{s}^{t,b(t)})\mathrm{d}s\right]\\
 & = & \pi(t).
\end{eqnarray*}
 This implies $v^{\pi}(t,x)>\pi(t)+g(t,x)$ and hence $A$ is strictly
implemented by $\pi.$
\end{proof}
In general the distribution of the reflected process $\tilde{X}$
is not explicitly known. Hence, one has to fall back to numerical
methods to approximate the transfer from Theorem \ref{thm:Cut-offs_implementable}.
For example one could use discretization schemes for the RSDE (\ref{eq:refldiff})
and Monte Carlo simulations to evaluate the expectation in Equation
(\ref{eq:DefinitionPayment-2}) (see e.g. \citet{saisho1987stochastic},
\citet{bossy2004symmetrized} or \citet{onskog2010weak}). If $X$
evolves according to a Brownian motion, then the distribution of $\tilde{X}$
is available in closed form.

\subsection{Properties of the Transfer\label{subsec:Properties-of-the-transfer}}

The next proposition summarizes properties of transfer implementing
a cut-off region. 
\begin{prop}
\label{prop:Properties_transfer}Let $b:[0,T]\to\mathbb{R}$ be a
regular cut-off. The transfer $\pi$ from Equation (\ref{eq:DefinitionPayment-2})
satisfies the following properties 

\begin{enumerate}
\item $\pi$ is càdlàg. In particular $\pi$ is bounded and measurable. 
\item $\pi$ is continuous at $t\in[0,T]$ if $b$ is continuous at $t$
or if $b$ has a downward jump at $t$.
\item $\pi$ has no upward jumps.
\item If $\pi$ has a downward jump at $t\in[0,T]$, then $b$ has an upward
jump at $t$.
\item $\pi$ converges to $0$ at time $T$: $\lim_{t\nearrow T}\pi(t)=0$.
\end{enumerate}
\end{prop}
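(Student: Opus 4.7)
The plan is to exploit the structure of the transfer, writing $h = f + (\partial_t + \mathcal{L})g$. The function $h$ is continuous and Lipschitz in $x$ with linear growth (from the assumptions on $f, g, \mu, \sigma$), and combined with the $L^2$-bound of Proposition~\ref{prop:refldiff}(\ref{enu:momest1}) this yields the uniform bound $|\pi(t)| \le C(T - t)$. This immediately proves statement (v); once c\`adl\`ag regularity is established in (i), it also gives boundedness, and measurability is automatic for any c\`adl\`ag function. All remaining statements reduce to understanding the convergence of $\tilde{X}_u^{s, b(s)}$ as $s \to t$, which I would analyze via a case split on the jump type of $b$ at $t$.

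For right continuity of $\pi$ at $t \in [0, T)$, I would use the flow relation $\tilde{X}_u^{t, b(t)} = \tilde{X}_u^{s, \tilde{X}_s^{t, b(t)}}$ for $u \ge s > t$ together with Proposition~\ref{prop:refldiff}(\ref{enu:momest2}) to obtain
\begin{equation*}
\mathbb{E}\Bigl[\sup_{u \in [s, T]} |\tilde{X}_u^{s, b(s)} - \tilde{X}_u^{t, b(t)}|^2\Bigr] \le K\, \mathbb{E}\bigl[|b(s) - \tilde{X}_s^{t, b(t)}|^2\bigr].
\end{equation*}
The right-hand side tends to zero as $s \searrow t$: $b$ is right continuous at $t$, the path of $\tilde{X}^{t, b(t)}$ is c\`adl\`ag with $\tilde{X}_t^{t, b(t)} = b(t)$, and dominated convergence is justified by Proposition~\ref{prop:refldiff}(\ref{enu:momest1}). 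Combining this with Lipschitz continuity of $h$ in $x$ and the vanishing of the corner integral $\int_t^s h(u, \tilde{X}_u^{t, b(t)}) du$, I would conclude $\pi(s) \to \pi(t)$.

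For left limits and for (ii) and (iii), I would do a case analysis at $t \in (0, T]$ based on Proposition~\ref{prop:refldiff}(\ref{enu:conv}). When $b$ is continuous at $t$, the choice $x = b(t) = b(t-)$ and $y = b(s) \to x$ in (\ref{enu:conv}) directly yields $\tilde{X}_t^{s, b(s)} \to b(t)$ in $L^2$. When $b$ has a downward jump ($b(t-) > b(t)$), (\ref{enu:conv}) can only be applied with $x \le b(t) \wedge b(t-) = b(t)$, so I take $x = b(t)$ and $y \equiv b(t)$ to obtain $\tilde{X}_t^{s, b(t)} \to b(t)$, and the sandwich $\tilde{X}_t^{s, b(t)} \le \tilde{X}_t^{s, b(s)} \le b(t)$ (from the comparison principle (\ref{enu:compprinc}) together with $b(t) \le b(s)$ for $s$ close to $t$, and the barrier bound (\ref{enu:minimality})) forces $\tilde{X}_t^{s, b(s)} \to b(t)$. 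In both of these cases the flow property, Proposition~\ref{prop:refldiff}(\ref{enu:momest2}), and Lipschitz continuity of $h$ give $\pi(s) \to \pi(t)$, proving (ii). When $b$ has an upward jump ($b(t-) < b(t)$), (\ref{enu:conv}) with $x = b(t-)$ and $y = b(s) \to b(t-)$ yields $\tilde{X}_t^{s, b(s)} \to b(t-)$, so $\pi(t-) = \mathbb{E}\bigl[\int_t^T h(u, \tilde{X}_u^{t, b(t-)}) du\bigr]$; the comparison principle together with the monotonicity of $h$ from the single crossing condition then delivers $\pi(t-) \ge \pi(t)$, proving (iii). Statement (iv) is the contrapositive of (ii) combined with (iii): a downward jump of $\pi$ forces $b$ to have an upward jump.

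The main obstacle is the downward-jump case of (ii). Proposition~\ref{prop:refldiff}(\ref{enu:conv}) requires the target $x$ to satisfy $x \le b(t) \wedge b(t-) = b(t)$, yet the natural candidate $b(s)$ for the starting value of $\tilde{X}^{s, b(s)}$ approaches $b(t-) > b(t)$, so (\ref{enu:conv}) does not apply to $y = b(s)$ directly. The sandwich argument via the barrier bound and the comparison principle circumvents this by replacing $b(s)$ with the smaller constant starting point $b(t)$ and squeezing $\tilde{X}_t^{s, b(s)}$ between its $b(t)$-started version and the barrier itself, which is what allows the argument to go through under the weak regularity assumed on $b$.
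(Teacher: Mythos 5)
Your proof is correct and follows the same skeleton as the paper's: right-continuity via the flow property and the moment estimate (\ref{enu:momest2}), left limits via the convergence property (\ref{enu:conv}), and then (ii)--(iv) read off from a formula for $\pi(t-)$ together with the comparison principle for $\tilde X$ and the monotonicity of $h=f+(\partial_t+\mathcal{L})g$ supplied by the single crossing condition. Where you diverge, you actually do better: the paper establishes the left-limit formula $\pi(t-) = \mathbb{E}\bigl[\int_t^T h(s, \tilde X_s^{t, b(t) \wedge b(t-)}) ds\bigr]$ by invoking Property (\ref{enu:conv}) directly for the starting value $b(t-\epsilon)$, but as you observe, (\ref{enu:conv}) requires $y \to x \le b(t) \wedge b(t-)$, and the choice $y=b(t-\epsilon)\to b(t-)$ fails this hypothesis precisely when $b$ has a downward jump at $t$. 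Your sandwich argument --- squeezing $\tilde X_t^{s,b(s)}$ between $\tilde X_t^{s,b(t)}$ (which falls under (\ref{enu:conv}) with the constant choice $y=x=b(t)$, since $b(t)\wedge b(s)=b(t)$ for $s$ near $t$) and the deterministic barrier value $b(t)$ --- supplies exactly the missing step and closes the downward-jump case cleanly. Two small points to tighten. First, the ``barrier bound'' $\tilde X_t^{s,b(s)}\le b(t)$ is item (i) of Definition~\ref{def:rsde}, not item (\ref{enu:minimality}) of Proposition~\ref{prop:refldiff}; (\ref{enu:minimality}) is the statement that $\tilde X$ agrees with $X$ before $\tau_b$. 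Second, (\ref{enu:momest1}) gives $\mathbb{E}\bigl[\sup_s(\tilde X_s^{t,\xi})^2\bigr]<\infty$ at each fixed $t$ but not a constant uniform in $t$, so the bound $|\pi(t)|\le C(T-t)$ needs one extra step (for instance, boundedness of the c\`adl\`ag $b$ on $[0,T]$ plus (\ref{enu:momest2}) against a single reference trajectory $\tilde X^{0,b(0)}$ via the flow property); alternatively you can read (v) off directly as the $t=T$ case of your left-limit formula, as the paper does, which is what makes the paper's ordering of (i) before (v) cleaner.
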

\begin{proof}
As in the proof of Theorem \ref{thm:Cut-offs_implementable} we introduce
the function $h(t,x)=f(t,x)+\left(\partial_{t}+\mathcal{L}\right)g(t,x)$.
By assumption $h$ is Lipschitz continuous and has linear growth in
$x$. The transfer $\pi$ is given by
\[
\pi(t)=\mathbb{E}\left[\int_{t}^{T}h(s,\tilde{X}_{s}^{t,b(t)})ds\right].
\]
We first show that $\pi$ is right-continuous. For $t\in[0,T]$ and
$\epsilon>0$ we have
\[
\left|\pi(t)-\pi(t+\epsilon)\right|\le\mathbb{E}\left[\int_{t}^{t+\epsilon}\left|h(s,\tilde{X}_{s}^{t,b(t)})\right|ds\right]+\mathbb{E}\left[\int_{t+\epsilon}^{T}\left|h(s,\tilde{X}_{s}^{t,b(t)})-h(s,\tilde{X}_{s}^{t+\epsilon,b(t+\epsilon)})\right|ds\right].
\]
It follows from the linear growth of $h$ and Property (\ref{enu:momest1})
of $\tilde{X}$ from Proposition \ref{prop:refldiff} that $\mathbb{E}\left[\int_{t}^{t+\epsilon}\left|h(s,\tilde{X}_{s}^{t,b(t)})\right|ds\right]\to0$
as $\epsilon\to0$. Moreover, the Lipschitz continuity of $h$ implies
\[
\mathbb{E}\left[\int_{t+\epsilon}^{T}\left|h(s,\tilde{X}_{s}^{t,b(t)})-h(s,\tilde{X}_{s}^{t+\epsilon,b(t+\epsilon)})\right|ds\right]\le C\mathbb{E}\left[\sup_{s\in[t+\epsilon,T]}\left|\tilde{X}_{s}^{t,b(t)}-\tilde{X}_{s}^{t+\epsilon,b(t+\epsilon)}\right|\right]
\]
for some constant $C>0$. By the flow property (Equation (\ref{eq:flow_prop}))
we have $\tilde{X}_{s}^{t,b(t)}=\tilde{X}_{s}^{t+\epsilon,\tilde{X}_{t+\epsilon}^{t,b(t)}}$.
Property (\ref{enu:momest2}) from Proposition \ref{prop:refldiff}
yields 
\[
\mathbb{E}\left[\sup_{s\in[t+\epsilon,T]}\left|\tilde{X}_{s}^{t,b(t)}-\tilde{X}_{s}^{t+\epsilon,b(t+\epsilon)}\right|\right]\le\tilde{C}\mathbb{E}\left[\left|\tilde{X}_{t+\epsilon}^{t,b(t)}-b(t+\epsilon)\right|\right].
\]
Right continuity of $\tilde{X}$ and $b$ then implies $\pi(t+)=\pi(t)$.\footnote{Here and in the sequel we use the notation $\pi(t+)=\lim_{\epsilon\searrow0}\pi(t+\epsilon)$
and $\pi(t-)=\lim_{\epsilon\searrow0}\pi(t-\epsilon)$ for the one-sided
limits.}

Concerning the left-hand limits of $\pi$ we show that 
\begin{equation}
\pi(t-)=\mathbb{E}\left[\int_{t}^{T}h(s,\tilde{X}_{s}^{t,b(t)\wedge b(t-)})ds\right].\label{eq:Left-continuity}
\end{equation}
for all $t\in(0,T].$ Equation (\ref{eq:Left-continuity}) implies
all remaining claims of Proposition \ref{prop:Properties_transfer}.
If $b$ is continuous at $t$ or has a downward jump ($b(t)\le b(t-)$),
then Equation (\ref{eq:Left-continuity}) yields continuity of $\pi$
at $t$: $\pi(t-)=\pi(t)$. Monotonicity of $h$ and the comparison
principle for the reflected process imply $\pi(t-)\ge\pi(t),$ i.e.
$\pi$ has no upward jumps. If $\pi$ has a downward jump at time
$t$ ($\pi(t-)>\pi(t)$), then Equation (\ref{eq:Left-continuity})
yields that $b$ has necessarily an upward jump ($b(t)>b(t-)$). Moreover,
it follows from Equation (\ref{eq:Left-continuity}) that $\pi(T-)=0$.
To prove Equation (\ref{eq:Left-continuity}) let $t\in(0,T]$ and
$\epsilon>0$. Then consider
\begin{eqnarray*}
\left|\pi(t-\epsilon)-\mathbb{E}\left[\int_{t}^{T}h(s,\tilde{X}_{s}^{t,b(t)\wedge b(t-)})ds\right]\right| & \le & \mathbb{E}\left[\int_{t-\epsilon}^{t}\left|h(s,\tilde{X}_{s}^{t-\epsilon,b(t-\epsilon)})\right|ds\right]\\
 &  & +\mathbb{E}\left[\int_{t}^{T}\left|h(s,\tilde{X}_{s}^{t-\epsilon,b(t-\epsilon)})-h(s,\tilde{X}_{s}^{t,b(t)\wedge b(t-)})\right|ds\right].
\end{eqnarray*}
By Property (\ref{enu:conv}) from Proposition \ref{prop:refldiff}
we have $\tilde{X}_{s}^{t-\epsilon,b(t-\epsilon)}\to\tilde{X}_{s}^{t,b(t)\wedge b(t-)}$
in $L^{2}$ as $\epsilon\searrow0$. Lipschitz continuity and linear
growth of $h$ then imply that $\mathbb{E}\left[\int_{t-\epsilon}^{t}\left|h(s,\tilde{X}_{s}^{t-\epsilon,b(t-\epsilon)})\right|ds\right]\to0$
and $\mathbb{E}\left[\int_{t}^{T}\left|h(s,\tilde{X}_{s}^{t-\epsilon,b(t-\epsilon)})-h(s,\tilde{X}_{s}^{t,b(t)\wedge b(t-)})\right|ds\right]\to0$
for $\epsilon\searrow0$. This yields the claim.
\end{proof}

\subsection{Uniqueness of the Transfer \label{subsec:Uniqueness-of-The-transfer}}

To prove a uniqueness result for the transfer from Theorem \ref{thm:Cut-offs_implementable}
we need the following auxiliary result about cut-off stopping times.
\begin{lem}
\label{lem:convergence_stopp_times}Let $b:[0,T]\to\mathbb{R}$ be
bounded from below. Then we have $\tau_{b}^{t,x}\nearrow T$ a.s.
for $x\searrow-\infty$ and for every $t\in[0,T].$
\end{lem}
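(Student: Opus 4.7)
The plan is to combine the monotonicity furnished by the comparison principle with a pathwise linearisation of the flow $x\mapsto X^{t,x}$, reducing the claim to a Doob-type tail estimate on a martingale whose quadratic variation is controlled uniformly in $x$.

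By the comparison principle (\ref{eq:ComparisonPrincipleOriginal}), $x\mapsto X^{t,x}_s(\omega)$ is almost surely non-decreasing, hence $x\mapsto\tau_b^{t,x}(\omega)$ is almost surely non-increasing. Therefore the almost-sure limit $\tau_{\ast}:=\lim_{x\to-\infty}\tau_b^{t,x}\in[0,T]$ exists, and to deduce $\tau_{\ast}=T$ a.s.\ it suffices to establish convergence in probability $\tau_b^{t,x}\to T$. Writing $c_0:=\inf_{s\in[0,T]}b(s)\in\mathbb{R}$ and observing that $\tau_b^{t,x}=T$ on the event $\{\sup_{s\in[t,T]}X^{t,x}_s<c_0\}$, the problem reduces to showing
$$\lim_{x\to-\infty}\mathbb{P}\!\left(\sup_{s\in[t,T]}X^{t,x}_s\geq c_0\right)=0.$$

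To prove this I would linearise. Setting $\Delta^x_s:=X^{t,x}_s-X^{t,0}_s$ and factoring the Lipschitz increments as $\mu(s,X^{t,x}_s)-\mu(s,X^{t,0}_s)=\alpha^x_s\Delta^x_s$ and $\sigma(s,X^{t,x}_s)-\sigma(s,X^{t,0}_s)=\beta^x_s\Delta^x_s$ (with the ratios set to $0$ when $\Delta^x=0$), one has predictable $\alpha^x,\beta^x$ with $|\alpha^x|,|\beta^x|\leq L$. Then $\Delta^x$ solves a linear SDE with initial value $x$, which is explicitly solved as $\Delta^x_s=xE^x_s$ with the strictly positive exponential
$$E^x_s:=\exp\!\left(\int_t^s\alpha^x_r\,dr-\tfrac{1}{2}\int_t^s(\beta^x_r)^2\,dr+\int_t^s\beta^x_r\,dW_r\right).$$
The drift part of $\log E^x$ is pathwise bounded in absolute value by $K:=L(T-t)(1+L/2)$, and the martingale $M^x_s:=\int_t^s\beta^x_r\,dW_r$ has quadratic variation bounded by $L^2(T-t)$. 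Doob's $L^2$ maximal inequality therefore yields, for every $\delta>0$ with $|\log\delta|>K$,
$$\sup_{x\in\mathbb{R}}\mathbb{P}\!\left(\inf_{s\in[t,T]}E^x_s<\delta\right)\leq\frac{4L^2(T-t)}{(|\log\delta|-K)^2}\xrightarrow[\delta\downarrow 0]{}0.$$

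On the complementary event $\{\inf_sE^x_s\geq\delta\}$, for $x<0$ one has $X^{t,x}_s=X^{t,0}_s+xE^x_s\leq X^{t,0}_s-|x|\delta$ for every $s\in[t,T]$, and therefore
$$\mathbb{P}\!\left(\sup_sX^{t,x}_s\geq c_0\right)\leq\mathbb{P}\!\left(\inf_sE^x_s<\delta\right)+\mathbb{P}\!\left(\sup_sX^{t,0}_s\geq c_0+|x|\delta\right).$$
For each fixed $\delta>0$ the second term tends to $0$ as $x\to-\infty$ because $\sup_sX^{t,0}_s$ is almost surely finite. Letting first $x\to-\infty$ and then $\delta\downarrow 0$ completes the argument. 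The main obstacle is the uniform-in-$x$ control of $\inf_sE^x_s$: the exponential $E^x$ itself depends on $x$ through $X^{t,x}$, so one cannot simply appeal to a fixed random variable. The uniform Lipschitz bounds $|\alpha^x|,|\beta^x|\leq L$ are precisely what make both the deterministic drift estimate on $\log E^x$ and the quadratic-variation bound on $M^x$ uniform in $x$.
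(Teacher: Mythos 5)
Your proof is correct, and it follows a genuinely different route from the paper's. The paper invokes the moment estimate from Kunita (Lemma~3.7 in the cited reference),
\[
\mathbb{E}\Bigl[\sup_{t\le s\le T}\bigl(1+(X_{s}^{t,x})^{2}\bigr)^{-2}\Bigr]\le C\bigl(1+x^{2}\bigr)^{-2},
\]
applies Fatou to conclude $\liminf_{x\to-\infty}\sup_{s}(1+(X_{s}^{t,x})^{2})^{-2}=0$ almost surely, hence $\inf_{s}|X_s^{t,x}|\to\infty$ along a sequence, and then uses the comparison principle to pin the sign and pass to the full limit $\sup_s X^{t,x}_s\to-\infty$. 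You instead linearise the flow in $x$: writing $X^{t,x}-X^{t,0}=xE^x$ with $E^x$ the stochastic exponential of the quotient coefficients $\alpha^x,\beta^x$ (which are predictable and bounded by the Lipschitz constant $L$), you obtain a uniform-in-$x$ lower bound on $\inf_s E^x_s$ in probability from Doob's maximal inequality, and then push $X^{t,x}$ below $c_0$ on the complement. Both arguments turn the global Lipschitz hypothesis into quantitative control of the $x$-dependence; yours is more self-contained (it needs only Doob/BDG and the explicit solution of a scalar linear SDE, rather than citing Kunita's inequality) and gives the additional structural information that the flow spreads at a rate $|x|\inf_s E^x_s$, uniformly over $x$. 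The paper's argument is shorter given the citation. One small bookkeeping point in your write-up: the passage from a.s. monotone convergence of $\tau_b^{t,x}$ to the claim that convergence in probability suffices is fine, but you should say explicitly that the a.s. limit $\tau_\ast$ must then equal $T$ a.s. because a.s. convergence implies convergence in probability and limits in probability are a.s. unique. Also note that the identity $\Delta^x=xE^x$ follows from pathwise uniqueness for the linear SDE $\mathrm{d}Z=\alpha^x Z\,\mathrm{d}s+\beta^x Z\,\mathrm{d}W$; this is worth flagging, since $\Delta^x$ is defined as a difference of two diffusions rather than a priori as the solution of that linear equation.
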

\begin{proof}
Fix $t\in[0,T]$. By \citet[Lemma 3.7]{kunita2004stochastic} there
exists a constant $C>0$ such that
\[
\mathbb{E}\left[\sup_{t\le s\le T}\left(\frac{1}{1+(X_{s}^{t,x})^{2}}\right)^{2}\right]\le C\left(\frac{1}{1+x^{2}}\right)^{2}.
\]
Then Fatou's Lemma implies
\[
\mathbb{E}\left[\liminf_{x\to-\infty}\sup_{t\le s\le T}\left(\frac{1}{1+(X_{s}^{t,x})^{2}}\right)^{2}\right]\le\liminf_{x\to-\infty}\mathbb{E}\left[\sup_{t\le s\le T}\left(\frac{1}{1+(X_{s}^{t,x})^{2}}\right)^{2}\right]\le\liminf_{x\to-\infty}C\left(\frac{1}{1+x^{2}}\right)^{2}=0.
\]
Consequently we have $\limsup_{x\to-\infty}\inf_{t\le s\le T}|X_{s}^{t,x}|=\infty$
a.s.  Together with the comparison principle for $X$ this yields
$\limsup_{x\to-\infty}\sup_{t\le s\le T}X_{s}^{t,x}=-\infty$ a.s.
It follows that $\tau_{b}^{t,x}\nearrow T$ for $x\searrow-\infty.$
\end{proof}
\begin{thm}
\label{thm:Uniqueness}Let $A$ be a regular cut-off region with boundary
$b$. Assume that $A$ is implemented by two transfers $\pi$ and
$\hat{\pi}$ satisfying $\lim_{t\nearrow T}\pi(t)=\lim_{t\nearrow T}\hat{\pi}(t)$.
Then $\pi(t)=\hat{\pi}(t)$ for all $t\in[0,T)$.
\end{thm}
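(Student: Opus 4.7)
Set $\phi = \pi - \hat\pi$; by hypothesis $\lim_{t \nearrow T}\phi(t)=0$, and the aim is to show $\phi(t)=0$ for every $t\in[0,T)$. The cornerstone of the plan is the identity
\[
(v^\pi - v^{\hat\pi})(t,x) \;=\; \mathbb{E}\bigl[\phi(\tau_b^{t,x})\bigr]\qquad\text{for every }(t,x)\in[0,T]\times\mathbb{R}, \tag{$\ast$}
\]
which I would derive by exploiting that $\tau_b^{t,x}$ is optimal in \emph{both} stopping problems (both $\pi$ and $\hat\pi$ implement $A$): writing $v^\pi(t,x)=W(t,x,\tau_b^{t,x})+\mathbb{E}[\pi(\tau_b^{t,x})]$ and analogously for $\hat\pi$, and subtracting, cancels the common $W$-term.

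Evaluating $(\ast)$ on the stopping region $x\in[b(t),\infty)$ shows that both sides equal $\phi(t)$ (on the right because $\tau_b^{t,x}=t$, on the left because $v^\pi(t,x)=g(t,x)+\pi(t)$ and analogously for $\hat\pi$), so $v^\pi-v^{\hat\pi}$ is constantly $\phi(t)$ there. To probe the continuation region I would take $x\searrow-\infty$ and invoke Lemma~\ref{lem:convergence_stopp_times}: $\tau_b^{t,x}\nearrow T$ a.s. Because $b$ is regular (and hence bounded on $[0,T]$), the comparison principle for $X$ in fact implies that, for almost every $\omega$, there is a threshold $x_{0}(\omega)$ below which $\tau_b^{t,x}(\omega)=T$ for all smaller $x$; hence $\phi(\tau_b^{t,x})\to\phi(T)$ a.s., and bounded convergence yields $(v^\pi-v^{\hat\pi})(t,x)\to\phi(T)$ as $x\to-\infty$.

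The main obstacle is then to bridge from these two pieces of information---namely, $(v^\pi-v^{\hat\pi})(t,\cdot)=\phi(t)$ on $[b(t),\infty)$ and $(v^\pi-v^{\hat\pi})(t,\cdot)\to\phi(T)$ at $-\infty$---to the equality $\phi(t)=\phi(T)$. The natural probabilistic device is the reflected process $\tilde X^{t,b(t)}$ of Subsection~\ref{sub:Constrained-Processes}: heuristically, applying It\^o's formula to $u(s,\tilde X_s^{t,b(t)}):=(v^\pi-v^{\hat\pi})(s,\tilde X_s^{t,b(t)})$ on $[t,T]$ yields a martingale, because the continuation equation $(\partial_t+\mathcal L)u=0$ kills the Lebesgue drift and the smooth-fit identity $\partial_x v^\pi(s,b(s))=\partial_x g(s,b(s))=\partial_x v^{\hat\pi}(s,b(s))$ kills the $\mathrm{d}l_s$ contribution on the support of the local time; then
\[
\phi(t)=u(t,b(t))=\mathbb{E}\bigl[u(T,\tilde X_T^{t,b(t)})\bigr]=\mathbb{E}[\phi(T)]=\phi(T).
\]
The hard part is making this rigorous under the paper's weak regularity (no classical smoothness of $v^\pi$, and only a c\`adl\`ag $b$ with summable downward jumps). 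Here the comparison and convergence properties of $\tilde X$ collected in Proposition~\ref{prop:refldiff} are essential, and the rigorous argument most likely proceeds by verifying the martingale property of $s\mapsto u(s,\tilde X_s^{t,b(t)})$ directly from the representation $(\ast)$ together with the strong Markov and flow properties of $\tilde X$, side-stepping the need to invoke smooth fit for $v^\pi$ itself.

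Once $\phi(t)=\phi(T)$ is established for all $t\in[0,T)$, the hypothesis $\lim_{t\nearrow T}\phi(t)=0$ forces $\phi(T)=0$, and hence $\phi\equiv0$ on $[0,T)$, which is the claim.
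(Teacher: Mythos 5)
Your identity $(\ast)$ is correct and is essentially the same algebraic consequence of joint optimality that the paper obtains; so is the use of Lemma~\ref{lem:convergence_stopp_times} to send $x\to-\infty$. But you correctly identify the crux — passing from ``$(v^\pi-v^{\hat\pi})(t,x)=\phi(t)$ for $x\ge b(t)$ and $(v^\pi-v^{\hat\pi})(t,x)\to\phi(T)$ as $x\to-\infty$'' to ``$\phi(t)=\phi(T)$'' — and the bridge you propose does not close it. The It\^o/martingale argument with the reflected process rests on exactly the kind of regularity ($v^\pi\in C^{1,2}$, smooth fit at $b$) that the paper deliberately avoids assuming, and your fallback suggestion (verify the martingale property of $s\mapsto u(s,\tilde X^{t,b(t)}_s)$ ``directly from $(\ast)$ together with the strong Markov and flow properties'') is genuinely not how the paper proceeds nor does it readily go through: $(\ast)$ evaluated along the reflected trajectory gives $u(s,\tilde X_s)=\mathbb{E}[\phi(\tau_b^{s,\tilde X_s})\mid\mathcal{F}_s]$, but $\tau_b^{s,\tilde X_s^{t,b(t)}}$ (hitting time of the \emph{unreflected} process restarted at $\tilde X_s^{t,b(t)}$) is not the same random time one would need for a telescoping martingale identity, so the ``side-stepping'' is not automatic.

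The actual missing ingredient is far more elementary and avoids the reflected process entirely. Both $v^\pi(t,\cdot)$ and $v^{\hat\pi}(t,\cdot)$ are Lipschitz in $x$ (Lemma~\ref{lem:continuity}), hence absolutely continuous, and the two stopping problems share the \emph{same} maximizer $\tau_b^{t,x}$ because $A$ is implemented by both transfers. The Milgrom--Segal envelope theorem therefore yields, for Lebesgue-a.e.\ $x$,
\[
v^\pi_x(t,x)\;=\;W_x(t,x,\tau_b^{t,x})\;=\;v^{\hat\pi}_x(t,x),
\]
so $(v^\pi-v^{\hat\pi})(t,\cdot)$ is constant in $x$. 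Combined with $(\ast)$ this constant must equal both $\phi(t)$ (evaluate at $x=b(t)$) and $\lim_{x\to-\infty}\mathbb{E}[\phi(\tau_b^{t,x})]$, and the second limit is handled exactly as you describe with Lemma~\ref{lem:convergence_stopp_times} and dominated convergence, after which the hypothesis $\lim_{t\nearrow T}\phi(t)=0$ pins the constant at zero. You should replace your speculative It\^o step with this envelope-theorem argument; everything else in your outline is sound.
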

\begin{proof}
Fix $t\in[0,T).$ To shorten notation we set $v=v^{\pi}$ and $\hat{v}=v^{\hat{\pi}}.$
By Lemma \ref{lem:continuity} the functions $v$ and $\hat{v}$ are
Lipschitz continuous in the $x$ variable. Similar considerations
yield that the function $x\mapsto W(t,x,\tau)$ is Lipschitz continuous
for every $\tau\in\mathcal{T}_{t,T}$. In particular, these functions
are absolutely continuous. Appealing to the envelope theorem from
\citet[Theorem 1]{milgrom2002envelope} yields that 
\[
v_{x}(t,x)=W_{x}(t,x,\tau_{b}^{t,x})=\hat{v}_{x}(t,x)
\]
for Lebesgue almost every $x\in\mathbb{R}$. Integrating from $x<b(t)$
to $b(t$) gives
\[
v(t,b(t))-v(t,x)=\hat{v}(t,b(t))-\hat{v}(t,x)
\]
or equivalently

\[
\pi(t)-\hat{\pi}(t)=\mathbb{E}\left[\pi(\tau_{b}^{t,x})-\hat{\pi}(\tau_{b}^{t,x})\right].
\]
Since $\pi$ and $\hat{\pi}$ are bounded we can appeal to Lemma \ref{lem:convergence_stopp_times}
to obtain
\[
\pi(t)-\hat{\pi}(t)=\lim_{x\to-\infty}\mathbb{E}\left[\pi(\tau_{b}^{t,x})-\hat{\pi}(\tau_{b}^{t,x})\right]=0,
\]
 where we used the dominated convergence theorem.
\end{proof}

\section{Application To Optimal Stopping\label{sec:Application-To-Optimal-Stopping}}

From Theorem \ref{thm:Uniqueness} we derive a probabilistic characterization
of optimal stopping times for stopping problems of the form
\begin{equation}
v(t,x)=\sup_{\tau\in\mathcal{T}_{t,T}}\mathbb{E}\left[\int_{t}^{\tau}f(s,X_{s}^{t,x})ds+g(\tau,X_{\tau}^{t,x})\right]\,,\label{eq:optstop}
\end{equation}
where $f,g$ and $X$ satisfy the single crossing condition. We say
that a stopping time $\tau\in\mathcal{T}_{t,T}$ is optimal in (\ref{eq:optstop})
for $(t,x)\in[0,T]\times\mathbb{R}$ if 
\[
v(t,x)=\mathbb{E}\left[\int_{t}^{\tau}f(s,X_{s}^{t,x})ds+g(\tau,X_{\tau}^{t,x})\right].
\]

\begin{cor}
\label{cor:appl_opt_stopping}Assume that the single crossing condition
is satisfied and let $b:[0,T]\to\mathbb{R}$ be a regular cut-off.
The stopping time $\tau_{b}^{t,x}$ is optimal in (\ref{eq:optstop})
for all $(t,x)\in[0,T]\times\mathbb{R}$, if and only if $b$ satisfies
the nonlinear integral equation 
\begin{equation}
\mathbb{E}\left[\int_{t}^{T}f(s,\tilde{X}_{s}^{t,b(t)})+(\partial_{t}+\mathcal{L})g(s,\tilde{X}_{s}^{t,b(t)})\mathrm{d}s\right]=0\label{eq:integral_equation_reflected}
\end{equation}
for all $t\in[0,T].$
\end{cor}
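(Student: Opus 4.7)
The plan is to recognize Corollary \ref{cor:appl_opt_stopping} as a direct consequence of combining the implementation formula from Theorem \ref{thm:Cut-offs_implementable} with the uniqueness statement from Theorem \ref{thm:Uniqueness}. The crucial observation is that the optimal stopping problem (\ref{eq:optstop}) is precisely the inverse optimal stopping problem (\ref{eq:stopping_problem_with_transfer}) with the zero transfer $\pi \equiv 0$. Therefore, asking that $\tau_b^{t,x}$ be optimal in (\ref{eq:optstop}) for every $(t,x)$ is the same as asking that the cut-off region $A=\{(t,x) : x \ge b(t)\}$ be implemented by the zero transfer.

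For the \emph{only if} direction, I would assume $\tau_b^{t,x}$ is optimal in (\ref{eq:optstop}) for every $(t,x)$. Then $A$ is implemented by $\pi \equiv 0$. On the other hand, since $b$ is a regular cut-off and the single crossing condition holds, Theorem \ref{thm:Cut-offs_implementable} gives that $A$ is also implemented by the transfer
\[
\pi^{\star}(t) = \mathbb{E}\left[\int_{t}^{T} f(s,\tilde{X}_s^{t,b(t)}) + (\partial_t + \mathcal{L})g(s,\tilde{X}_s^{t,b(t)})\,\mathrm{d}s\right].
\]
Proposition \ref{prop:Properties_transfer}(v) shows $\lim_{t \nearrow T} \pi^{\star}(t) = 0$, which matches the terminal limit of the zero transfer. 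Theorem \ref{thm:Uniqueness} then forces $\pi^{\star}(t) = 0$ for every $t \in [0,T)$, which is exactly the integral equation (\ref{eq:integral_equation_reflected}). At $t=T$ the equation holds trivially since the integral is empty.

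For the \emph{if} direction, I would assume that (\ref{eq:integral_equation_reflected}) holds for all $t \in [0,T]$, which means $\pi^{\star} \equiv 0$. Theorem \ref{thm:Cut-offs_implementable} asserts that $A$ is implemented by $\pi^{\star}$, so it is implemented by the zero transfer; unwinding the definition of implementability, this says precisely that $\tau_b^{t,x}$ is optimal in (\ref{eq:optstop}) for every $(t,x) \in [0,T]\times \mathbb{R}$.

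There is no real obstacle here beyond the careful bookkeeping of the terminal boundary condition needed to invoke Theorem \ref{thm:Uniqueness}; the delicate point is verifying that the zero transfer and $\pi^{\star}$ share the same limit at $T$, which is supplied by Proposition \ref{prop:Properties_transfer}(v). Once that is in place, the corollary is essentially a restatement of the uniqueness theorem with $\hat{\pi} = 0$.
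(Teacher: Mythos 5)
Your proof is correct and follows exactly the paper's argument: both directions reduce (\ref{eq:optstop}) to implementation by the zero transfer, invoke Theorem \ref{thm:Cut-offs_implementable} to get the reflected-process transfer $\pi^\star$, and apply Theorem \ref{thm:Uniqueness} together with the terminal limit from Proposition \ref{prop:Properties_transfer} to conclude $\pi^\star \equiv 0$. No substantive differences.
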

\begin{proof}
First assume that (\ref{eq:integral_equation_reflected}) holds true
for every $t\in[0,T]$. Then Theorem \ref{thm:Cut-offs_implementable}
implies that the cut-off region with boundary $b$ is implemented
by the zero transfer. This means that $\tau_{b}^{t,x}$ is optimal
in (\ref{eq:optstop}) for every $(t,x)\in[0,T]\times\mathbb{R}$.

For the converse direction assume that $\tau_{b}^{t,x}$ is optimal
in (\ref{eq:optstop}) for every $(t,x)\in[0,T]\times\mathbb{R}$.
Then the cut-off region with boundary $b$ is implemented by the zero
transfer $\hat{\pi}=0$. By Theorem \ref{thm:Cut-offs_implementable}
it is also implemented by the transfer $\pi(t)=\mathbb{E}\left[\int_{t}^{T}f(s,\tilde{X}_{s}^{t,b(t)})+(\partial_{t}+\mathcal{L})g(s,\tilde{X}_{s}^{t,b(t)})\mathrm{d}s\right]$.
By Proposition \ref{prop:Properties_transfer} the transfer $\pi$
satisfies $\lim_{t\nearrow T}\pi(t)=0$. Then Theorem \ref{thm:Uniqueness}
implies that $\pi(t)=\hat{\pi}(t)=0$ for all $t\in[0,T].$
\end{proof}
In the literature on optimal stopping there is a well known link between
optimal stopping boundaries and a nonlinear integral equation differing
from Equation (\ref{eq:integral_equation_reflected}). It was independently
derived by \citet{kim1990analytic}, \citet{jacka1991optimal} and
\citet{carr2006alternative} who considered the optimal exercise of
an American option. Using the \textit{early exercise premium representation}
of the price of an American option, the authors arrive at a nonlinear
integral equation that is satisfied by the optimal exercise boundary.
The question whether the optimal exercise boundary is the only solution
to the integral equation was left open, until more than a decade later
\citet{peskir2005american} answered it in the affirmative. Using
the change-of-variable formula with local time on curves derived in
\citet{peskir2005change}, allows \citet{peskir2005american} to characterize
the optimal exercise boundary as the unique solution of the nonlinear
integral equation in the class of continuous functions. The methodology
of \citet{peskir2005american} was subsequently applied to solve optimal
stopping problems with more general diffusion and Markov processes,
multiple stopping boundaries and more general payoff functionals.
These problems include for example the optimal exercise of Russian
(\citet{peskir2005russian}) and British options (\citet{peskir2011british,peskir2013british}),
the Wiener disorder problem (\citet{gapeev2006wiener}), sequential
testing problems (\citet{gapeev2004wiener}, \citet{zhitlukhin2013chernoff}),
the optimal stopping problem for maxima in diffusion models (\citet{gapeev2006discounted}),
optimal prediction problems (\citet{du2007trap}), Bayesian disorder
problems (\citet{zhitlukhin2013bayesian}), optimal liquidation problems
(\citet{ekstrom2016optimal}) and multiple optimal stopping problems
(\citet{de2014optimal}). In the framework of the present paper this
integral equation is given by 
\begin{equation}
\mathbb{E}\left[\int_{t}^{T}\left(f(s,X_{s}^{t,b(t)})+(\partial_{t}+\mathcal{L})g(s,X_{s}^{t,b(t)})\right)\mathbf{1}_{\{X_{s}^{t,b(t)}\le b(s)\}}\mathrm{d}s\right]=0\label{eq:integraleq}
\end{equation}
(cf. \citet[Chapter IV, Section 14]{pevskir2006optimal}). 

Besides its interpretation in terms of the early exercise premium,
Equation (\ref{eq:integraleq}) is valuable from a numerical point
of view. Indeed, it only requires for every $s\in[0,T]$ the law of
$X_{s}$ which, when not known explicitly, can be approximated in
various ways (e.g. using the Kolmogorov forward equation or Euler-Maruyama
schemes). Once these distributions are available, (\ref{eq:integraleq})
is a nonlinear Volterra (or Fredholm) integral equation which can
be tackled using well-established numerical schemes provided in the
literature. We also refer to the work of \citet{belomestny2010iterative},
where an iterative procedure is proposed to approximate the solution
of the integral equation and the value function. In contrast, it is
not clear whether (\ref{eq:integral_equation_reflected}) can be numerically
solved with high accuracy, since it is path-dependent in terms of
$b$. In particular, it is not possible to compute the marginal laws
of $\tilde{X}$ upfront, as the unknown boundary $b$ is entangled
in the process $\tilde{X}$ (the distribution of the random variable
$\tilde{X}_{s}^{t,b(t)}$ depends on the whole barrier $(b(r))_{t\le r\le s}$
from time $t$ to $s$). 

We also mention that the change of variables formula of \citet{peskir2005change},
was extended in \citet{peskir2007change} to the multidimensional
setting. This allows to characterize optimal stopping times as first
hitting times also in higher dimensions (see e.g. \citet{glover2013three},
\citet{gapeev2013} and \citet{peskir2014quickest}). Whether an extension
of the methodology presented here to the multidimensional setting
is possible is not clear. Already the formulation of the monotonicity
in the single crossing condition (Condition \ref{def:SingleCrossing})
in higher dimensions is not straightforward. The construction of multivariate
reflected processes is also highly nontrivial.

In general, the set of solutions to (\ref{eq:integral_equation_reflected})
is included in the set of solutions to (\ref{eq:integraleq}). Indeed,
if $b$ solves (\ref{eq:integral_equation_reflected}) then by Corollary
\ref{cor:appl_opt_stopping} it is an optimal stopping boundary and
thus, under appropriate regularity conditions, it is also a solution
to (\ref{eq:integraleq}). In the cases where uniqueness holds for
(\ref{eq:integraleq}) (see the list of references above), the converse
implication holds true as well. In the case of a constant barrier
$b(t)=b\in\mathbb{R}$ and $X$ a Brownian motion one can directly
relate the two equations. Indeed, in this case it follows from the
reflection principle that for all $x\leq b$
\[
\mathbb{P}\left[\tilde{X}_{s}^{t,b(t)}\leq x\right]=2\,\mathbb{P}\left[X_{s}^{t,b(t)}\leq x\right]
\]
 and thus the constant barrier $b$ solves Equation (\ref{eq:integraleq})
if and only if it solves Equation (\ref{eq:integral_equation_reflected}).
The question whether one can in general relate the two integral equations
without taking the detour via optimal stopping problems is left open
for future research.

\section{Appendix}
\begin{proof}[Proof of Proposition \ref{prop:refldiff}]
Existence and uniqueness of $(\tilde{X},l)$ follow from \citet{rutkowski1980stochastic}.
See also \citet[Theorem 3.4]{Slominski20101701} for the time-homogeneous
case. By construction of $(\tilde{X},l)$ we also have (\ref{enu:momest1}).

We next show (\ref{enu:minimality}). Note that the solution to the
unreflected SDE (\ref{eq:sde}) solves the reflected SDE for $s<\tau_{b}$.
As the solution to the reflected SDE is unique (\ref{enu:minimality})
follows.

To prove (\ref{enu:compprinc}) and (\ref{enu:momest2}) we consider
without loss of generality only the case $t=0$. For $\xi_{1},\xi_{2}\in\mathbb{R}$
we write $(\tilde{X}^{i},l^{i})=(\tilde{X}^{0,\xi_{i}},l^{0,\xi_{i}})$,
($i=1,2$) and introduce the processes $D_{t}=\tilde{X}_{t}^{1}-\tilde{X}_{t}^{2}$
and $\Gamma_{t}=\sup_{s\le t}\max(0,D_{s})^{2}$. Applying the Meyer-Itô
formula \citet[Theorem 71, Chapter 4]{protter2005stochastic} to the
function $x\mapsto\max(0,x)^{2}$ yields
\begin{equation}
\begin{split}\max(0,D_{s})^{2}= & \max(0,D_{0})^{2}+2\int_{0}^{s}1_{\{D_{r-}>0\}}D_{r-}\mathrm{d}D_{r}+\int_{0}^{s}1_{\{D_{r-}>0\}}\mathrm{d}\left[D\right]{}_{r}^{c}\\
 & +\sum_{0<r\le s}\left(\max(0,D_{r})^{2}-\max(0,D_{r-})^{2}-1_{\{D_{r-}>0\}}D_{r-}\Delta D_{r}\right).
\end{split}
\label{eq:maxdelta0}
\end{equation}
Since $D$ only jumps when $b$ has a downward jump and since $\tilde{X}^{i}$
jumps to the barrier we have $-\left(\Delta b(r)\right)^{-}\le\Delta D_{r}\le0$
on the set $\left\{ D_{r-}>0\right\} $. Moreover, $D$ has bounded
paths. Since $b$ has summable downward jumps we have $\sum_{0<r\le s}1_{\{D_{r-}>0\}}\left|D_{r-}\Delta D_{r}\right|<\infty$
a.s. Hence, we can rewrite Equation \ref{eq:maxdelta0} as follows
\begin{equation}
\begin{split}\max(0,D_{s})^{2}= & \max(0,D_{0})^{2}+2\int_{0}^{s}1_{\{D_{r}>0\}}D_{r}\mathrm{d}D_{r}^{c}+\int_{0}^{s}1_{\{D_{r-}>0\}}\mathrm{d}\left[D\right]{}_{r}^{c}\\
 & +\sum_{0<r\le s}\left(\max(0,D_{r})^{2}-\max(0,D_{r-})^{2}\right).
\end{split}
\label{eq:maxdelta}
\end{equation}
Regarding the jump terms in Equation (\ref{eq:maxdelta}), assume
that there exists $r\in(0,s]$ such that $\max(0,D_{r})^{2}>\max(0,D_{r-})^{2}.$
This implies $D_{r}>0$ and $D_{r}>D_{r-}$. Since $\tilde{X}^{i}$
jumps if and only if $l^{i}$ jumps ($i=1,2)$ we obtain $\tilde{X}_{r}^{1}>\tilde{X}_{r}^{2}$
and $l_{r}^{2}-l_{r-}^{2}>l_{r}^{1}-l_{r-}^{1}$. It follows that
$l_{r}^{2}-l_{r-}^{2}>0,$ since $l^{1}$ is non-decreasing. Hence,
$l^{2}$ jumps at $r$, which implies that $\tilde{X}_{r}^{2}=b(r)$.
Thus, we obtain the contradiction $\tilde{X}_{r}^{1}>b(r).$ Therefore
we have 
\[
\sum_{0<r\le s}\left(\max(0,D_{r})^{2}-\max(0,D_{r-})^{2}\right)\le0.
\]
For the last integral in Equation (\ref{eq:maxdelta}) the Lipschitz
continuity of $\sigma$ implies 
\[
\int_{0}^{s}1_{\{D_{r}>0\}}d\langle D\rangle_{r}^{c}=\int_{0}^{s}1_{\{D_{r}>0\}}(\sigma(r,\tilde{X}_{r}^{1})-\sigma(r,\tilde{X}_{r}^{2}))^{2}\mathrm{d}r\le L^{2}\int_{0}^{s}\max(0,D_{r})^{2}\mathrm{d}r\le L^{2}\int_{0}^{s}\Gamma_{r}dr.
\]
The first integral of Equation (\ref{eq:maxdelta}) decomposes into
the following terms, which we will consider successively. By the Lipschitz
continuity of $\mu$ we have
\[
2\int_{0}^{s}1_{\{D_{r}>0\}}D_{r}(\mu(r,\tilde{X}_{r}^{1})-\mu(r,\tilde{X}_{r}^{2}))\mathrm{d}r\le2L\int_{0}^{s}\max(0,D_{r})^{2}\mathrm{d}r\le L\int_{0}^{s}\Gamma_{r}dr.
\]
Next, we have
\[
-2\int_{0}^{s}1_{\{D_{r}>0\}}D_{r}\mathrm{d}l_{r}^{1,c}\le0
\]
 and
\[
2\int_{0}^{s}1_{\{D_{r}>0\}}D_{r}\mathrm{d}l_{r}^{2,c}=2\int_{0}^{s}1_{\{\tilde{X}_{r}^{1}>b(r)\}}D_{r}\mathrm{d}l_{r}^{2,c}=0.
\]
Moreover, it follows from the Burkholder-Davis-Gundy inequality, the
Lipschitz continuity of $\sigma$ and Young's inequality that
\begin{eqnarray*}
\mathbb{E}\left[\sup_{s\le t}\int_{0}^{s}1_{\{D_{r}>0\}}D_{r}(\sigma(r,\tilde{X}_{r}^{1})-\sigma(r,\tilde{X}_{r}^{2}))\mathrm{d}W_{r}\right] & \le & C\mathbb{E}\left[\sqrt{\int_{0}^{t}1_{\{D_{r}>0\}}D_{r}^{4}\mathrm{d}r}\right]\\
 & \le & C\mathbb{E}\left[\sqrt{\Gamma_{t}\int_{0}^{t}\Gamma_{r}dr}\right]\\
 & \le & \frac{1}{2}\mathbb{E}\left[\Gamma_{t}\right]+\frac{1}{2}C^{2}\mathbb{E}\left[\int_{0}^{t}\Gamma_{r}dr\right].
\end{eqnarray*}
Putting everything together, we obtain
\[
\mathbb{E}\left[\Gamma_{t}\right]\le\Gamma_{0}+K\int_{0}^{t}\mathbb{E}\left[\Gamma_{r}\right]dr
\]
for some constant $K>0.$ Then Gronwall's lemma yields 
\begin{equation}
\mathbb{E}\left[\sup_{s\le t}\max(0,\tilde{X}_{s}^{1}-\tilde{X}_{s}^{2})^{2}\right]=\mathbb{E}[\Gamma_{t}]\le C\Gamma_{0}=C\max(0,\xi_{1}-\xi_{2})^{2}\label{eq:supmax}
\end{equation}
for some constant $C>0$. If $\xi_{1}\le\xi_{2}$ this directly yields
(\ref{enu:compprinc}). For (\ref{enu:momest2}) observe that we have
\[
\mathbb{E}\left[\sup_{s\le t}(\tilde{X}_{s}^{1}-\tilde{X}_{s}^{2})^{2}\right]\le\mathbb{E}\left[\sup_{s\le t}\max(0,\tilde{X}_{s}^{1}-\tilde{X}_{s}^{2})^{2}\right]+\mathbb{E}\left[\sup_{s\le t}\max(0,\tilde{X}_{s}^{2}-\tilde{X}_{s}^{1})^{2}\right].
\]
Then Inequality (\ref{eq:supmax}) yields $\mathbb{E}\left[\sup_{s\le t}(\tilde{X}_{s}^{1}-\tilde{X}_{s}^{2})^{2}\right]\le\tilde{C}(\xi_{1}-\xi_{2})^{2}$.
The case $p=1$ follows from Jensen's inequality. Claim (\ref{enu:comprinc2})
follows by performing similar arguments with $D=\tilde{X}^{t,\xi}-X^{t,\xi}$.

In order to prove Equation (\ref{eq:loctime}), we set
\begin{equation}
Y_{s}=Y_{s}^{t,\xi}=\int_{t}^{s}\mu(u,\tilde{X}_{u}^{t,\xi})du+\int_{t}^{r}\sigma(u,\tilde{X}_{u-}^{t,\xi})\mathrm{d}W_{u},\quad\hat{l}_{s}=\sup_{t\le r\le s}(\xi+Y_{r}-b(r))^{+}\label{eq:yappendix}
\end{equation}
and $\hat{X}=\xi+Y_{s}-\hat{l}_{s}.$ Then it is straightforward to
show that $(\hat{X},\hat{l})$ is a solution to the Skorokhod problem
associated with $Y$ and barrier $b$ (cf. \citet[Definition 2.5]{Slominski20101701}).
Since $(\tilde{X},l)$ is also a solution, we obtain Equation (\ref{eq:loctime})
by uniqueness of solutions to the Skorokhod problem (cf. \citet[Proposition 2.4]{Slominski20101701}).

Finally we prove Claim (\ref{enu:conv}). To this end let $x\le b(t)\wedge b(t-)$
and $t_{n}\nearrow t$ and $x_{n}\to x$ as $n\to\infty.$ We write
$\tilde{X}^{n}=\tilde{X}^{t_{n},x_{n}\wedge b(t_{n})}$ and $Y^{n}=Y^{t_{n},x_{n}\wedge b(t_{n})}$
(see Equation (\ref{eq:yappendix}) for the definition of $Y$). Then
we have
\begin{eqnarray*}
|\tilde{X}_{t}^{n}-x| & = & |x_{n}\wedge b(t_{n})-x+Y_{t}^{n}-\sup_{t_{n}\le r\le t}(x_{n}\wedge b(t_{n})+Y_{r}^{n}-b(r))^{+}|\\
 & \le & |x_{n}\wedge b(t_{n})-x|+\sup_{t_{n}\le r\le t}(x_{n}\wedge b(t_{n})-b(r))^{+}+2\sup_{t_{n}\le r\le t}|Y_{r}^{n}|.
\end{eqnarray*}
Squaring this inequality and taking expectations yields
\[
\mathbb{E}\left[|\tilde{X}_{t}^{n}-x|^{2}\right]\le3|x_{n}\wedge b(t_{n})-x|^{2}+3\sup_{t_{n}\le r\le t}\left((x_{n}\wedge b(t_{n})-b(r))^{+}\right)^{2}+6\mathbb{E}\left[\sup_{t_{n}\le r\le t}|Y_{r}^{n}|^{2}\right].
\]
The first two terms converge to $0$ for $n\to\infty$ since $b$
is càdlàg and $x\le b(t)\wedge b(t-)$. Regarding the last term, observe
that Jensen's and the Burkholder-Davis-Gundy inequality yields 
\begin{eqnarray*}
\mathbb{E}\left[\sup_{t_{n}\le r\le t}|Y_{r}^{n}|^{2}\right] & \le & C\int_{t_{n}}^{t}\mathbb{E}\left[\mu(s,\tilde{X}_{s}^{n})^{2}+\sigma(s,\tilde{X}_{s}^{n})^{2}\right]ds
\end{eqnarray*}
for some constant $C>0$ (not depending on $n$). It remains to prove
that $\mathbb{E}\left[\mu(s,\tilde{X}_{s}^{n})^{2}+\sigma(s,\tilde{X}_{s}^{n})^{2}\right]$
is a bounded sequence. To this end assume without loss of generality
that $\tilde{X}^{0}=\tilde{X}^{0,b(0)}$, then the linear growth of
$\mu$ and $\sigma$, the Markov property of $\tilde{X}^{0}$ and
Claim (\ref{enu:momest2}) imply 
\begin{eqnarray*}
\mathbb{E}\left[\mu(s,\tilde{X}_{s}^{n})^{2}+\sigma(s,\tilde{X}_{s}^{n})^{2}\right] & \le & C_{1}\left(1+\mathbb{E}\left[(\tilde{X}_{s}^{n}-\tilde{X}_{s}^{0})^{2}+(\tilde{X}_{s}^{0})^{2}\right]\right)\\
 & \le & C_{2}\left(1+\mathbb{E}\left[(\tilde{X}_{t_{n}}^{0}-x_{n}\wedge b(t_{n}))^{2}+(\tilde{X}_{s}^{0})^{2}\right]\right)
\end{eqnarray*}
for some $C_{1},C_{2}>0$. This is a bounded sequence by Claim (\ref{enu:momest1}),
which yields $\mathbb{E}[\sup_{t_{n}\le r\le t}|Y_{r}^{n}|^{2}]\to0$
as $n\to\infty.$
\end{proof}
\bibliographystyle{economic/ecta}
\bibliography{delstop}

\begin{thebibliography}{47}
\newcommand{\enquote}[1]{``#1''}
\expandafter\ifx\csname natexlab\endcsname\relax\def\natexlab#1{#1}\fi

\bibitem[\protect\citeauthoryear{Belomestny and Gapeev}{Belomestny and
  Gapeev}{2010}]{belomestny2010iterative}
\textsc{Belomestny, D. and P.~V. Gapeev} (2010): \enquote{An iterative
  procedure for solving integral equations related to optimal stopping
  problems,} \emph{Stochastics: An International Journal of Probability and
  Stochastics Processes}, 82, 365--380.

\bibitem[\protect\citeauthoryear{Bo and Yao}{Bo and Yao}{2007}]{bo2007strong}
\textsc{Bo, L. and R.~Yao} (2007): \enquote{{Strong Comparison Result for a
  Class of Reflected Stochastic Differential Equations with non-Lipschitzian
  Coefficients},} \emph{Frontiers of Mathematics in China}, 2, 73--85.

\bibitem[\protect\citeauthoryear{Board}{Board}{2007}]{board2007selling}
\textsc{Board, S.} (2007): \enquote{{Selling options},} \emph{Journal of
  Economic Theory}, 136, 324--340.

\bibitem[\protect\citeauthoryear{Bossy, Gobet, and Talay}{Bossy
  et~al.}{2004}]{bossy2004symmetrized}
\textsc{Bossy, M., E.~Gobet, and D.~Talay} (2004): \enquote{{Symmetrized Euler
  scheme for an efficient approximation of reflected diffusions},}
  \emph{Journal of applied probability}, 41, 877--889.

\bibitem[\protect\citeauthoryear{Carr, Jarrow, and Myneni}{Carr
  et~al.}{1992}]{carr2006alternative}
\textsc{Carr, P., R.~Jarrow, and R.~Myneni} (1992): \enquote{{Alternative
  Characterizations of American Put Options},} \emph{Mathematical Finance}, 2,
  87--106.

\bibitem[\protect\citeauthoryear{Chaleyat-Maurel, {El Karoui}, and
  Marchal}{Chaleyat-Maurel et~al.}{1980}]{chaleyat1980reflexion}
\textsc{Chaleyat-Maurel, M., N.~{El Karoui}, and B.~Marchal} (1980):
  \enquote{{R{\'e}flexion discontinue et syst{\`e}mes stochastiques},}
  \emph{The Annals of Probability}, 1049--1067.

\bibitem[\protect\citeauthoryear{De~Angelis and Kitapbayev}{De~Angelis and
  Kitapbayev}{2014}]{de2014optimal}
\textsc{De~Angelis, T. and Y.~Kitapbayev} (2014): \enquote{On the optimal
  exercise boundaries of swing put options,} \emph{arXiv preprint
  arXiv:1407.6860}.

\bibitem[\protect\citeauthoryear{Dellacherie and Meyer}{Dellacherie and
  Meyer}{1978}]{dellacherie1978probabilities}
\textsc{Dellacherie, C. and P.~A. Meyer} (1978): \emph{{Probabilities and
  potential}}, North-Holland Publishing Company.

\bibitem[\protect\citeauthoryear{Drugowitsch, Moreno-Bote, Churchland, Shadlen,
  and Pouget}{Drugowitsch et~al.}{2012}]{drugowitsch2012cost}
\textsc{Drugowitsch, J., R.~Moreno-Bote, A.~K. Churchland, M.~N. Shadlen, and
  A.~Pouget} (2012): \enquote{{The cost of accumulating evidence in perceptual
  decision making},} \emph{The Journal of Neuroscience}, 32, 3612--3628.

\bibitem[\protect\citeauthoryear{Du~Toit and Peskir}{Du~Toit and
  Peskir}{2007}]{du2007trap}
\textsc{Du~Toit, J. and G.~Peskir} (2007): \enquote{The trap of complacency in
  predicting the maximum,} \emph{The Annals of Probability}, 340--365.

\bibitem[\protect\citeauthoryear{Ekstroem and Vaicenavicius}{Ekstroem and
  Vaicenavicius}{2016}]{ekstrom2016optimal}
\textsc{Ekstroem, E. and J.~Vaicenavicius} (2016): \enquote{Optimal liquidation
  of an asset under drift uncertainty,} \emph{SIAM Journal on Financial
  Mathematics}, 7, 357--381.

\bibitem[\protect\citeauthoryear{Fudenberg, Strack, and Strzalecki}{Fudenberg
  et~al.}{2015}]{fudenberg2015stochastic}
\textsc{Fudenberg, D., P.~Strack, and T.~Strzalecki} (2015):
  \enquote{{Stochastic Choice and Optimal Sequential Sampling},} \emph{arXiv
  preprint arXiv:1505.03342}.

\bibitem[\protect\citeauthoryear{Gapeev and Peskir}{Gapeev and
  Peskir}{2004}]{gapeev2004wiener}
\textsc{Gapeev, P.~V. and G.~Peskir} (2004): \enquote{The Wiener sequential
  testing problem with finite horizon,} \emph{Stochastics and Stochastic
  Reports}, 76, 59--75.

\bibitem[\protect\citeauthoryear{Gapeev and Peskir}{Gapeev and
  Peskir}{2006}]{gapeev2006wiener}
---\hspace{-.1pt}---\hspace{-.1pt}--- (2006): \enquote{The Wiener disorder
  problem with finite horizon,} \emph{Stochastic processes and their
  applications}, 116, 1770--1791.

\bibitem[\protect\citeauthoryear{Gapeev and Shiryaev}{Gapeev and
  Shiryaev}{2013}]{gapeev2013}
\textsc{Gapeev, P.~V. and A.~N. Shiryaev} (2013): \enquote{Bayesian Quickest
  Detection Problems for Some Diffusion Processes,} \emph{Advances in Applied
  Probability}, 45, 164--185.

\bibitem[\protect\citeauthoryear{Gapeev et~al.}{Gapeev
  et~al.}{2006}]{gapeev2006discounted}
\textsc{Gapeev, P.~V. et~al.} (2006): \enquote{Discounted optimal stopping for
  maxima in diffusion models with finite horizon,} \emph{Electron. J. Probab},
  11.

\bibitem[\protect\citeauthoryear{Glover, Hulley, Peskir et~al.}{Glover
  et~al.}{2013}]{glover2013three}
\textsc{Glover, K., H.~Hulley, G.~Peskir, et~al.} (2013):
  \enquote{Three-dimensional Brownian motion and the golden ratio rule,}
  \emph{The Annals of Applied Probability}, 23, 895--922.

\bibitem[\protect\citeauthoryear{Hopenhayn and Nicolini}{Hopenhayn and
  Nicolini}{1997}]{hopenhayn1997optimal}
\textsc{Hopenhayn, H. and J.~P. Nicolini} (1997): \enquote{{Optimal
  unemployment insurance},} \emph{Journal of Political Economy}, 105, 412--438.

\bibitem[\protect\citeauthoryear{Jacka}{Jacka}{1991}]{jacka1991optimal}
\textsc{Jacka, S.} (1991): \enquote{{Optimal Stopping and the American Put},}
  \emph{Mathematical Finance}, 1, 1--14.

\bibitem[\protect\citeauthoryear{Jacka and Lynn}{Jacka and
  Lynn}{1992}]{jacka1992finite}
\textsc{Jacka, S. and J.~Lynn} (1992): \enquote{{Finite-Horizon Optimal
  Stopping, Obstacle Problems and the Shape of the Continuation Region},}
  \emph{Stochastics: An International Journal of Probability and Stochastic
  Processes}, 39, 25--42.

\bibitem[\protect\citeauthoryear{Karatzas and Shreve}{Karatzas and
  Shreve}{1991}]{karatzas1991brownian}
\textsc{Karatzas, I.~A. and S.~E. Shreve} (1991): \emph{{Brownian motion and
  stochastic calculus}}, vol. 113, Springer.

\bibitem[\protect\citeauthoryear{Kim}{Kim}{1990}]{kim1990analytic}
\textsc{Kim, I.} (1990): \enquote{{The Analytic Valuation of American
  Options},} \emph{Review of Financial Studies}, 3, 547--572.

\bibitem[\protect\citeauthoryear{Kotlow}{Kotlow}{1973}]{kotlow1973}
\textsc{Kotlow, D.~B.} (1973): \enquote{{A Free Boundary Problem Connected with
  the Optimal Stopping Problem for Diffusion Processes},} \emph{Transactions of
  the American Mathematical Society}, 184, pp. 457--478.

\bibitem[\protect\citeauthoryear{Kruse and Strack}{Kruse and
  Strack}{2015}]{krusestrack2013optimal}
\textsc{Kruse, T. and P.~Strack} (2015): \enquote{{Optimal Stopping with
  Private Information},} \emph{To appear in the Journal of Economic Theory}.

\bibitem[\protect\citeauthoryear{Kunita}{Kunita}{2004}]{kunita2004stochastic}
\textsc{Kunita, H.} (2004): \enquote{{Stochastic differential equations based
  on L{\'e}vy processes and stochastic flows of diffeomorphisms},} in
  \emph{{Real and stochastic analysis}}, Springer, 305--373.

\bibitem[\protect\citeauthoryear{Liptser and Shiryaev}{Liptser and
  Shiryaev}{2013}]{liptser2013statistics}
\textsc{Liptser, R. and A.~N. Shiryaev} (2013): \emph{Statistics of random
  Processes: I. general Theory}, vol.~5, Springer Science \& Business Media.

\bibitem[\protect\citeauthoryear{McCall}{McCall}{1970}]{mccall1970economics}
\textsc{McCall, J.~J.} (1970): \enquote{{Economics of information and job
  search},} \emph{The Quarterly Journal of Economics}, 84, 113--126.

\bibitem[\protect\citeauthoryear{Milgrom and Segal}{Milgrom and
  Segal}{2002}]{milgrom2002envelope}
\textsc{Milgrom, P. and I.~Segal} (2002): \enquote{{Envelope theorems for
  arbitrary choice sets},} \emph{Econometrica}, 70, 583--601.

\bibitem[\protect\citeauthoryear{M{\"u}ller and Siegmund}{M{\"u}ller and
  Siegmund}{1994}]{muller1994change}
\textsc{M{\"u}ller, H.-G. and D.~Siegmund} (1994): \enquote{Change-point
  problems,} Ims.

\bibitem[\protect\citeauthoryear{{\"O}nskog and Nystr{\"o}m}{{\"O}nskog and
  Nystr{\"o}m}{2010}]{onskog2010weak}
\textsc{{\"O}nskog, T. and K.~Nystr{\"o}m} (2010): \enquote{{Weak approximation
  of obliquely reflected diffusions in time-dependent domains},} \emph{Journal
  of Computational Mathematics}, 28, 579--605.

\bibitem[\protect\citeauthoryear{Peskir}{Peskir}{2005{\natexlab{a}}}]{peskir2005change}
\textsc{Peskir, G.} (2005{\natexlab{a}}): \enquote{A change-of-variable formula
  with local time on curves,} \emph{Journal of Theoretical Probability}, 18,
  499--535.

\bibitem[\protect\citeauthoryear{Peskir}{Peskir}{2005{\natexlab{b}}}]{peskir2005american}
---\hspace{-.1pt}---\hspace{-.1pt}--- (2005{\natexlab{b}}): \enquote{{On the
  American Option Problem},} \emph{Mathematical Finance}, 15, 169--181.

\bibitem[\protect\citeauthoryear{Peskir}{Peskir}{2005{\natexlab{c}}}]{peskir2005russian}
---\hspace{-.1pt}---\hspace{-.1pt}--- (2005{\natexlab{c}}): \enquote{{The
  Russian Option: Finite Horizon},} \emph{Finance and Stochastics}, 9,
  251--267.

\bibitem[\protect\citeauthoryear{Peskir}{Peskir}{2007}]{peskir2007change}
---\hspace{-.1pt}---\hspace{-.1pt}--- (2007): \enquote{A change-of-variable
  formula with local time on surfaces,} in \emph{S{\'e}minaire de
  probabilit{\'e}s XL}, Springer, 70--96.

\bibitem[\protect\citeauthoryear{Peskir}{Peskir}{2014}]{peskir2014quickest}
---\hspace{-.1pt}---\hspace{-.1pt}--- (2014): \enquote{Quickest detection of a
  hidden target and extremal surfaces,} \emph{The Annals of Applied
  Probability}, 24, 2340--2370.

\bibitem[\protect\citeauthoryear{Peskir and Samee}{Peskir and
  Samee}{2011}]{peskir2011british}
\textsc{Peskir, G. and F.~Samee} (2011): \enquote{The British put option,}
  \emph{Applied Mathematical Finance}, 18, 537--563.

\bibitem[\protect\citeauthoryear{Peskir and Samee}{Peskir and
  Samee}{2013}]{peskir2013british}
---\hspace{-.1pt}---\hspace{-.1pt}--- (2013): \enquote{The British call
  option,} \emph{Quantitative Finance}, 13, 95--109.

\bibitem[\protect\citeauthoryear{Peskir and Shiryaev}{Peskir and
  Shiryaev}{2006{\natexlab{a}}}]{pevskir2006optimal}
\textsc{Peskir, G. and A.~Shiryaev} (2006{\natexlab{a}}): \emph{{Optimal
  Stopping and Free-Boundary Problems}}, vol.~10, Birkhauser.

\bibitem[\protect\citeauthoryear{Peskir and Shiryaev}{Peskir and
  Shiryaev}{2006{\natexlab{b}}}]{peskir2006optimal}
---\hspace{-.1pt}---\hspace{-.1pt}--- (2006{\natexlab{b}}): \emph{{Optimal
  Stopping and Free-Boundary Problems}}, vol.~10, Birkhauser.

\bibitem[\protect\citeauthoryear{Protter}{Protter}{2005}]{protter2005stochastic}
\textsc{Protter, P.} (2005): \emph{{Stochastic Integration and Differential
  Equations}}, {Applications of Mathematics}, Springer.

\bibitem[\protect\citeauthoryear{Rutkowski}{Rutkowski}{1980}]{rutkowski1980stochastic}
\textsc{Rutkowski, M.} (1980): \enquote{{Stochastic integral equations with a
  reflecting barrier},} \emph{Demonstratio Math. XIII (2)}, 483--507.

\bibitem[\protect\citeauthoryear{Saisho}{Saisho}{1987}]{saisho1987stochastic}
\textsc{Saisho, Y.} (1987): \enquote{{Stochastic differential equations for
  multi-dimensional domain with reflecting boundary},} \emph{Probability Theory
  and Related Fields}, 74, 455--477.

\bibitem[\protect\citeauthoryear{Shiryaev}{Shiryaev}{2007}]{shiryaev2007optimal}
\textsc{Shiryaev, A.~N.} (2007): \emph{Optimal stopping rules}, vol.~8,
  Springer Science \& Business Media.

\bibitem[\protect\citeauthoryear{Slominski and Wojciechowski}{Slominski and
  Wojciechowski}{2010}]{Slominski20101701}
\textsc{Slominski, L. and T.~Wojciechowski} (2010): \enquote{{Stochastic
  Differential Equations with Jump Reflection at Time-Dependent Barriers},}
  \emph{Stochastic Processes and their Applications}, 120, 1701--1721.

\bibitem[\protect\citeauthoryear{Villeneuve}{Villeneuve}{2007}]{villeneuve2007threshold}
\textsc{Villeneuve, S.} (2007): \enquote{{On threshold strategies and the
  smooth-fit principle for optimal stopping problems},} \emph{Journal of
  applied probability}, 44, 181--198.

\bibitem[\protect\citeauthoryear{Zhitlukhin and Muravlev}{Zhitlukhin and
  Muravlev}{2013}]{zhitlukhin2013chernoff}
\textsc{Zhitlukhin, M. and A.~Muravlev} (2013): \enquote{On Chernoff's
  Hypotheses Testing Problem for the Drift of a Brownian Motion,} \emph{Theory
  of Probability \& Its Applications}, 57, 708--717.

\bibitem[\protect\citeauthoryear{Zhitlukhin and Shiryaev}{Zhitlukhin and
  Shiryaev}{2013}]{zhitlukhin2013bayesian}
\textsc{Zhitlukhin, M. and A.~Shiryaev} (2013): \enquote{Bayesian disorder
  problems on filtered probability spaces,} \emph{Theory of Probability \& Its
  Applications}, 57, 497--511.

\end{thebibliography}

\end{document}